\theoremstyle{thmstyleone}%
\newtheorem{theorem}{Theorem}
\newtheorem{lemma}{Lemma} 
\newtheorem{corollary}{Corollary}
\theoremstyle{thmstyletwo}%
\newtheorem{example}{Example}%
\newtheorem{remark}{Remark}%
\theoremstyle{thmstylethree}%
\newtheorem{definition}{Definition}%
\begin{document}

\title[Article Title]{Circular External Difference Families: Construction and Non-Existence}


\author*[1]{\fnm{Huawei} \sur{Wu}}\email{wuhuawei1996@gmail.com}

\author[2]{\fnm{Jing} \sur{Yang}}

\author[3]{\fnm{Keqin} \sur{Feng}}
\equalcont{This research of K. Feng is supported by the National Natural Science Foundation of China under Grant 12031011.}

\affil[1,2,3]{\orgdiv{Department of Mathematical Sciences}, \orgname{Tsinghua University}, \orgaddress{\city{Beijing}, \postcode{100084}, \country{China}}}


\abstract{The circular external difference family and its strong version, which themselves are of independent combinatorial interest, were proposed as variants of the difference family to construct new unconditionally secure non-malleable threshold schemes. In this paper, we present new results regarding the construction and non-existence of (strong) circular external difference families, thereby solving several open problems on this topic.}

\keywords{Circular external difference family, cyclotomic class, finite field, secret sharing scheme}



\maketitle

\section{Introduction}
Throughout this paper, $\mathbb{C}$, $\mathbb{Q}$, $\mathbb{Z}$, $\mathbb{N}_+$ and $\mathbb{N}$ denote the set of real numbers, rational numbers, integers, positive integers and non-negative integers, respectively. If $m$ is a positive integer, we use $[m]$ to denote the set $\{1,2,\cdots,m\}$ and use $\mathbb{Z}_m$ to denote the ring $\mathbb{Z}/m\mathbb{Z}$. If $q$ is a prime power, we use $\mathbb{F}_q$ to denote the finite field with $q$ elements.

Let $(G,\cdot)$ be a finite group of order $n$. For any non-empty subsets $A$, $B$ of $G$, we define $\Delta(A,B)$ to be the following multiset
$$\{\{ab^{-1}:\ a\in A,\ b\in B\}\}.$$
A non-empty subset $D$ of $G$ is called an $(n,l;\lambda)$-difference set (DS) in $G$ if $|D|=l$ and each element $g\in G\backslash\{1_G\}$ occurs exactly $\lambda$ times in $\Delta(D,D)$. Obviously, a necessary condition for the existence of an $(n,l;\lambda)$-difference set is $l(l-1)=\lambda(n-1)$.

A convenient representation for a multiset with elements in $G$ is through the use of the group ring
$$\mathbb{Z}[G]=\{\sum\limits_{g\in G}a_gg:\ a_g\in\mathbb{Z}\}.$$
Indeed, any multiset $A$ with elements in $G$ can be associated with the element $\sum_{g\in G}n_gg$ in $\mathbb{Z}[G]$, where $n_g$ is the number of occurrences of $g$ in $A$; conversely, any element of $\mathbb{Z}[G]$ with coefficients in $\mathbb{N}$ corresponds to a multiset with elements in $G$. Therefore, in the group ring notation, a non-empty subset $D$ of $G$ is an $(n,l;\lambda)$-difference set if and only if
\begin{equation}\label{20231010equation1}
  DD^{(-1)}=l+\lambda(G-1_{G})\quad\mbox{in }\mathbb{Z}[G],
\end{equation}
where $D^{(-1)}=\{d^{-1}:\ d\in D\}$.

If $G$ is an abelian group, then we can characterize difference sets in $G$ using the characters of $G$. Let $\widehat{G}$ denote the character group of $G$. For any $\chi\in\widehat{G}$, we can extend $\chi$ linearly to a multiplicative function on $\mathbb{Z}[G]$ via
$$\chi(\sum\limits_{g\in G}a_gg)=\sum\limits_{g\in G}a_g\chi(g)\in\mathbb{C}.$$
As with the group $G$, the elements of the group ring $\mathbb{Z}[G]$ can be distinguished by the characters of $G$; that is, for any $x$, $y\in\mathbb{Z}[G]$, $x=y$ if and only if $\chi(x)=\chi(y)$ for all $\chi\in\widehat{G}$. In the character notation, the condition (\ref{20231010equation1}) is equivalent to the following equality:
$$|\chi(D)|^2=\chi(D)\overline{\chi(D)}=\begin{cases}
    l^2,       & \chi=1_{\widehat{G}},                            \\
    l-\lambda, & \chi\in\widehat{G}\backslash\{1_{\widehat{G}}\},
  \end{cases}$$
where $1_{\widehat{G}}$ is the trivial character that maps all element of $G$ to $1\in\mathbb{C}$.

The difference set is a fundamental and significant mathematical structure in combinatorial design theory. For basic results on this topic, we refer to \cite{storer1967cyclotomy}. Over the past few decades, numerous generalizations and variants of the difference set have been proposed and extensively studied in combinatorics. One such concept that extends the idea of the difference set to multiple subsets is known as the difference family.

\begin{definition}\label{20231110def1}
  A family $\{A_1,\cdots,A_m\}$ of $m$ pairwise disjoint non-empty subsets of $G$ is called an $(n,m,l;\lambda)$-difference family (DF) in $G$ if
  \begin{enumerate}[label=(\arabic*)]
    \item $|A_i|=l$ for any $1\le i\le m$;
    \item each element $g\in G\backslash\{1_G\}$ occurs exactly $\lambda$ times in $\cup_{i=1}^m\Delta(A_i,A_i)$.
  \end{enumerate}
\end{definition}

As in the case of the difference set, a family $\{A_1,\cdots,A_m\}$ of $m$ pairwise disjoint non-empty subset of $G$ is an $(n,m,l;\lambda)$-difference family if and only if $|A_i|=l$ for any $1\le i\le m$ and
$$\sum\limits_{j=1}^mA_jA_j^{(-1)}=ml\cdot 1_G+\lambda(G-1_G)\quad\mbox{in }\mathbb{Z}[G].$$

Some generalizations and variants of the difference family were proposed primarily due to their applications in cryptography and information security. In 1971, V. L. Levenshtein introduced in \cite{levenshtein1971one} a significant variant called the external difference family, which was found to be useful in constructing optimal comma-free codes for secure synchronous communication. Later, Ogata el al. discovered in \cite{ogata2004new} that external difference families and strong external difference families could be employed to construct secure authentication codes and secret sharing schemes. The external difference family and its strong version are defined as follows.

\begin{definition}
  A family $\{A_1,\cdots,A_m\}$ of $m$ pairwise disjoint subsets of $G$ is called an $(n,m,l;\lambda)$-external difference family (EDF) in $G$ if
  \begin{enumerate}[label=(\arabic*)]
    \item $|A_i|=l$ for any $1\le i\le m$;
    \item each element $g\in G\backslash\{1_G\}$ occurs exactly $\lambda$ times in $\cup_{1\le i\ne j\le m}\Delta(A_i,A_j)$, or equivalently,
          $$\sum\limits_{1\le i\ne j\le m}A_iA_j^{(-1)}=\lambda(G-1_G)\quad\mbox{in }\mathbb{Z}[G].$$
  \end{enumerate}
  The family $\{A_1,\cdots,A_m\}$ is called an $(n,m,l;\lambda)$-strong external difference family (SEDF) in $G$ if
  \begin{enumerate}[label=(\arabic*)]
    \item $|A_i|=l$ for any $1\le i\le m$;
    \item for any $1\le i\le m$, each element $g\in G\backslash\{1_G\}$ occurs exactly $\lambda$ times in $\cup_{1\le j\le m,j\ne i}\Delta(A_i,A_j)$, or equivalently,
          $$\sum\limits_{\substack{1\le j\le m\\j\ne i}}A_iA_j^{(-1)}=\lambda(G-1_G)\quad\mbox{in }\mathbb{Z}[G].$$
  \end{enumerate}
\end{definition}

In contrast to external difference families, the difference families defined in Definition \ref{20231110def1} can be regarded as "internal" difference families. Extensive research has been dedicated to the construction and non-existence of EDFs and SEDFs. We refer to \cite{huczynska2023internal} for an comprehensive survey on this topic.

Recently, in order to construct unconditionally secure non-malleable threshold schemes, S. Veitch and D. R. Stinson proposed in \cite{veitch2023unconditionally} two new variants of the difference family, namely the circular external difference family (CEDF) and the strong circular external difference family (SCEDF), which themselves are of independent combinatorial interest. They presented several basic results on the construction and non-existence of CEDFs and SCEDFs, and raised some open problems for further investigation.

In this paper, we provide an in-depth and comprehensive study of CEDFs and SCEDFs, thereby solving several open problems raised in \cite{veitch2023unconditionally}. Specifically, the remainder of this paper is arranged as follows.
\begin{itemize}
  \item In Section 2, we generalize the cyclotomic construction of CEDFs proposed in \cite{veitch2023unconditionally} from multiple perspectives, which, in particular, enables us to guarantee the existence of a $(q,m,2;1)$-$c$-CEDF in the additive group of the finite field $\mathbb{F}_q$ for some $c\in [m-1]$, provided that $q=4m+1\ge 13$. Additionally, we present a lifting theorem on CEDFs.
  \item In Section 3, we prove that all SCEDFs are trivial, meaning that they can be constructed from $(n,2,l;\lambda$)-SEDFs. We also present new results on the non-existence of such SEDFs with $n=2p$, where $p$ is a prime number.
  \item Finally, Section 4 serves as a brief summary and conclusion.
\end{itemize}

\section{Circular External Difference Families}

We start by recalling the definition of the circular external difference family.

\begin{definition}[{\cite[Definition 4.2]{veitch2023unconditionally}}]
  Let $(G,\cdot)$ be a finite abelian group of order $n$, let $l$, $m\ge 2$ and let $S$ be a non-empty subset of $[m-1]$. A family $\{A_0,\cdots,A_{m-1}\}$ of $m$ pairwise disjoint subsets of $G$ is called an $(n,m,l;\lambda)$-$S$-circular external difference family (CEDF) in $G$ if
  \begin{enumerate}[label=(\arabic*)]
    \item $|A_i|=l$ for any $0\le i\le m-1$;
    \item each element $g\in G\backslash\{1_G\}$ occurs exactly $\lambda$ times in $\cup_{c\in S}\cup_{i=0}^{m-1}\Delta(A_{i+c},A_i)$, or equivalently,
          $$\sum\limits_{c\in S}\sum\limits_{i=0}^{m-1}A_{i+c}A_i^{(-1)}=\lambda(G-1_G),$$
          where the addition of the subscripts is performed in the sense of modulo $m$.
  \end{enumerate}
  Obviously, a necessary condition for the existence of an $(n,m,l;\lambda)$-$S$-CEDF is $|S|\cdot ml^2=\lambda(n-1)$. If $S=\{c\}$ for some $c\in[m-1]$, then an $(n,m,l;\lambda)$-$S$-CEDF is also called an $(n,m,l;\lambda)$-$c$-CEDF.
\end{definition}

The most common approach to constructing internal and external difference families involves utilizing cyclotomy in finite fields, and the same is true for CEDFs and SCEDFs. So, let us first review the relevant definitions.

\begin{definition}
  Let $q$ be a prime power and assume that $q-1=ef$ for two positive integers $e$, $f\ge 2$. Fix a primitive element $\theta$ of $\mathbb{F}_q$. For any $i\in\mathbb{Z}$, define $C_i=\theta^iC$, where $C=\left\langle\theta^e\right\rangle$ denotes the subgroup of $\mathbb{F}_q^*$ generated by $\theta^e$. Then $C_{i_1}=C_{i_2}$ if and only if $i_1\equiv i_2\ (\mbox{mod}\ e)$ and the sets $C_i$ $(0\le i\le e-1)$ are all the cosets of $C$ in $\mathbb{F}_q^*$, which will be called the cyclotomic classes of $\mathbb{F}_q$ of order $e$ (with respect to $\theta$). The cyclotomic numbers of order $e$ are defined by
  $$(i,j)_{e}^{(q)}=\#\{x\in C_i:\ x+1\in C_j\}$$
  for any $i$, $j\in\mathbb{Z}$.
\end{definition}

The following theorem regarding the cyclotomic numbers of order $4$ is well-known and we will employ it in the proof of Corollary \ref{20231012corol1}.

\begin{lemma}[\cite{storer1967cyclotomy}]\label{20231012lemma1}
  Let $p$ be an odd prime number, let $q=p^n$ with $n\in\mathbb{N}_+$ such that $q\equiv 1\ ({\rm{mod}}\ 4)$, and assume that $q-1=ef$ for two positive integers $e$, $f\ge 2$.\\
  \indent If $p\equiv 1\ ({\rm{mod}}\ 4)$, then there exist integers $s$, $t$ such that $q=s^2+4t^2$, $p\nmid st$ and $s\equiv 1\ ({\rm{mod}}\ 4)$. Here $s$ is uniquely determined by these conditions, while $t$ is uniquely determined to differ by at most a sign. If $p\equiv 3\ ({\rm{mod}}\ 4)$ $($and thus $n$ is even$)$, then we take $t=0$ and take $s$ to be the one congruent to $1$ modulo $4$ in $\{\pm p^{n/2}\}$. The $s$ and $t$ mentioned in the rest of this theorem retain their meanings in this paragraph.
  \begin{enumerate}[label=(\arabic*)]
    \item If $q\equiv 1\ ({\rm{mod}}\ 8)$, then
          \begin{align*}
            (1,0)_4^{(q)}=\frac{1}{16}(q-3+2s+8t), \qquad(3,0)_4^{(q)}=\frac{1}{16}(q-3+2s-8t);
          \end{align*}
    \item If $q\equiv 5\ ({\rm{mod}}\ 8)$, then
          \begin{align*}
            (1,0)_4^{(q)}=(3,0)_4^{(q)}=\frac{1}{16}(q-3-2s).
          \end{align*}
  \end{enumerate}
  Note that we omit the other cyclotomic numbers of order $4$ since they are not used in this paper.
\end{lemma}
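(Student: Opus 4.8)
Proof proposal (for Lemma~\ref{20231012lemma1}).

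This statement is classical --- it is Storer's evaluation of the cyclotomic numbers of order $4$ --- and the natural route is the standard one via Gauss and Jacobi sums over $\mathbb{F}_q$. The assertions about $s$ and $t$ are pure algebra in the principal ideal domain $\mathbb{Z}[i]$. If $p\equiv 1\pmod 4$, then $p=\pi\overline{\pi}$ for a Gaussian prime $\pi$, so $q=p^n=\pi^n\overline{\pi}^{\,n}$; any element of $\mathbb{Z}[i]$ of norm $q$ with both coordinates coprime to $p$ is a unit multiple of $\pi^n$ or of $\overline{\pi}^{\,n}$, and since $q$ is odd with $q\equiv 1\pmod 4$ exactly one coordinate of such an element is even. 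Writing it as $s+2ti$ and choosing the unit so that the odd coordinate satisfies $s\equiv 1\pmod 4$ yields exactly the claimed existence, with $s$ then determined and $t$ determined up to sign (the two conjugates $\pi^n$ and $\overline{\pi}^{\,n}$ give $s+2ti$ and $s-2ti$). If $p\equiv 3\pmod 4$, then $p$ is inert in $\mathbb{Z}[i]$, so $n$ is forced to be even and the only elements of norm $p^n$ are $p^{n/2}$ times a unit; hence $t=0$, $s=\pm p^{n/2}$, and automatically $q=p^n\equiv 1\pmod 8$.

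For the cyclotomic numbers, fix the quartic character $\chi$ of $\mathbb{F}_q^*$ with $\chi(\theta)=\sqrt{-1}$ and put $\phi=\chi^2$, the quadratic character. Expanding the indicator function of a cyclotomic class as an average of multiplicative characters, one rewrites $16$ times a cyclotomic number of order $4$ as $q-3$ plus a fixed $\mathbb{Z}[i]$-linear combination of the Jacobi sums $J(\chi^a,\chi^b)$ with $a,b$ ranging over residues modulo $4$. The terms with $a=b=0$ supply the leading $q-3$; those involving the trivial character drop out of the relevant combinations; $J(\phi,\phi)=-\phi(-1)=-1$ since $q\equiv 1\pmod 4$; and every remaining quartic Jacobi sum reduces to $J(\chi,\chi)$ through the standard identities, e.g. $J(\chi^a,\chi^b)=J(\chi^b,\chi^a)$, $J(\chi,\overline{\chi})=-\chi(-1)$, $J(\chi,\phi)=\chi(4)J(\chi,\chi)$, and $\overline{J(\chi,\chi)}=J(\overline{\chi},\overline{\chi})$. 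Thus everything comes down to evaluating $J(\chi,\chi)$, which supplies the $\pm 2s$ and $\pm 8t$ corrections.

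The evaluation of $J(\chi,\chi)$ is the heart of the proof. From $g(\chi)^2=g(\phi)\,J(\chi,\chi)$ together with $g(\phi)^2=\phi(-1)q=q$ and $|g(\chi)|^2=q$ one gets $|J(\chi,\chi)|^2=q$; hence, by the first paragraph, $J(\chi,\chi)$ is one of the finitely many Gaussian integers $\pm s\pm 2ti$ with $s\equiv 1\pmod 4$. To single out the correct one --- i.e. to fix the sign of its real part and to relate the sign of its imaginary part to the chosen primitive element $\theta$ --- one reduces the defining sum $J(\chi,\chi)=\sum_{x\ne 0,1}\chi(x)\chi(1-x)$ modulo the ramified prime $(1+i)$ and its powers, exploiting that every value of $\chi$ is congruent to $1$ modulo $(1+i)$; this yields the classical $(1+i)$-adic congruence, which pins the real part of $J(\chi,\chi)$ down to $\pm s$ and, together with the norm condition, determines $J(\chi,\chi)$ up to complex conjugation. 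The residual sign of $t$ is genuinely not determined by $\chi$ alone, which is exactly why the statement fixes $t$ only up to a sign and why the two formulas for $(1,0)_4^{(q)}$ and $(3,0)_4^{(q)}$ in case (1) are interchanged by $t\mapsto-t$. When $p\equiv 3\pmod 4$ this step collapses: $J(\chi,\chi)$ has norm $p^n$ with $p$ inert, so it equals $\pm p^{n/2}$, i.e. $t=0$.

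Finally one substitutes $J(\phi,\phi)=-1$ and the value of $J(\chi,\chi)$ (and the quartic Jacobi sums reduced to it) into the character-sum expressions for $16\,(1,0)_4^{(q)}$ and $16\,(3,0)_4^{(q)}$ and simplifies, distinguishing the two cases by the value $\chi(-1)=(-1)^{(q-1)/4}$: one has $\chi(-1)=1$ exactly when $-1$ is a fourth power, i.e. $q\equiv 1\pmod 8$, and then the imaginary part of $J(\chi,\chi)$ survives and produces the $\pm 8t$ terms; whereas $\chi(-1)=-1$, i.e. $q\equiv 5\pmod 8$, forces the $t$-contributions to cancel (and flips the sign of the $s$-term, a factor $\chi(-1)$ entering), giving $(1,0)_4^{(q)}=(3,0)_4^{(q)}=\tfrac1{16}(q-3-2s)$. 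The one genuinely delicate point throughout is the sign and normalization bookkeeping --- keeping mutually consistent the choice $\chi(\theta)=\sqrt{-1}$, the normalization $s\equiv 1\pmod 4$, the $(1+i)$-adic congruence for $J(\chi,\chi)$, and the labelling of $(1,0)_4^{(q)}$ against $(3,0)_4^{(q)}$; once these are fixed compatibly, the remaining computations are routine.
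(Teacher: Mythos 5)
The paper does not prove this lemma at all: it is quoted verbatim from Storer's monograph on cyclotomy and used as a black box, so there is no in-paper argument to compare yours against. Judged on its own terms, your outline is the standard (and essentially Storer's) route --- arithmetic of $\mathbb{Z}[{\rm i}]$ for the existence and uniqueness of $s$ and $t$, Fourier inversion to write $16\,(i,j)_4^{(q)}$ as $q-3$ plus a $\mathbb{Z}[{\rm i}]$-combination of Jacobi sums, reduction of all quartic Jacobi sums to $J(\chi,\chi)$, and the norm computation $|J(\chi,\chi)|^2=q$ plus a $(1+{\rm i})$-adic congruence to identify $J(\chi,\chi)$ --- and the final formulas you would obtain agree with the lemma in both cases $q\equiv 1$ and $q\equiv 5\ ({\rm mod}\ 8)$. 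Two caveats. First, this is an outline, not a proof: the explicit inversion formula expressing $16\,(1,0)_4^{(q)}$ and $16\,(3,0)_4^{(q)}$ in terms of the $J(\chi^a,\chi^b)$, and the verification that the trivial-character and $J(\phi,\phi)$ terms combine to give exactly $q-3$ with the stated signs, are precisely the computations that carry all the content, and you defer them as ``routine.'' Second, one step as literally written does not close: knowing that the real part of $J(\chi,\chi)$ is $\pm s$ and that the norm is $q$ leaves four candidates $\pm s\pm 2t{\rm i}$, i.e.\ an ambiguity of sign \emph{and} conjugation, not conjugation alone. What the classical congruence actually gives is $J(\chi,\chi)\equiv -1\ ({\rm mod}\ (1+{\rm i})^3)$, which fixes the real part exactly (it is $-s$ with $s\equiv 1\ ({\rm mod}\ 4)$, i.e.\ congruent to $3$ modulo $4$); you need that exact sign, not just ``$\pm s$,'' to land on $q-3+2s$ rather than $q-3-2s$ in case (1). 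You correctly flag the sign bookkeeping as the delicate point, but this particular sign is load-bearing and should be pinned down rather than left to normalization.
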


The following cyclotomic construction of CEDFs in the additive group of any finite field was proposed in \cite{veitch2023unconditionally}.

\begin{theorem}[{\cite[Thereom 4.3]{veitch2023unconditionally}}]\label{20231110them1}
  Let $q$ be a prime power such that $q-1=ml^2$ with $l$, $m\ge 2$,  $\theta$ be a primitive element in $\mathbb{F}_q$,  $\beta=\theta^l$,  $C=\left\langle\beta^m\right\rangle=\left\langle\theta^{ml}\right\rangle$ and  $C_i=\theta^iC$ $(0\le i\le ml-1)$ be the cyclotomic classes of $\mathbb{F}_q$ of order $ml$ $($with respect to $\theta)$.\\
  \indent For any $0\le j\le m-1$, put $A_j=\beta^jC(=C_{jl})$. Then the family $\{A_0,\cdots,A_{m-1}\}$ is a $(q,m,l;1)$-$1$-CEDF in $(\mathbb{F}_q,+)$ if and only if $\{\beta^{1+km}-1:\ 0\le k\le l-1\}$ is a complete set of coset representatives of $H=\left\langle\beta\right\rangle$ in $\mathbb{F}_q^*$. If $l=2$ $($and thus $q=4m+1$, $\beta=\theta^2)$, then the family $\{A_0,\cdots,A_{m-1}\}$ is a $(q,m,2;1)$-$1$-CEDF in $(\mathbb{F}_q,+)$ if and only if $\theta^4-1$ is a non-square element of $\mathbb{F}_q^*$.
\end{theorem}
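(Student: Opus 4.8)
The plan is to translate the CEDF condition into the group ring $\mathbb{Z}[(\mathbb{F}_q,+)]$ and then check it character by character. First I would write down what must be verified: with $A_j = \beta^j C = C_{jl}$, the family is a $(q,m,l;1)$-$1$-CEDF if and only if $\sum_{i=0}^{m-1} A_{i+1} A_i^{(-1)} = G - 0$ in the additive group ring, where $A_i^{(-1)}$ is the negation $-A_i$. Since $A_{i+1} = \beta^{(i+1)l} C$ and $-A_i = -\beta^{il}C = \beta^{il}(-C)$, each product $A_{i+1}(-A_i)$ is a union of translates (in the multiplicative sense) of the ``difference'' set $\beta^l C \cdot (-C) = \beta^l C(C-1) + \text{stuff}$; the cleaner route is to note $A_{i+1}A_i^{(-1)} = \beta^{il}\cdot\bigl(\beta^l C - \beta^l C\bigr)$ suitably interpreted, so the whole sum is $\sum_{i=0}^{m-1}\beta^{il}\cdot T$ where $T = \sum_{x\in C, y\in C}(\beta^l x - y)$. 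The key structural observation is that $\{\beta^{il} : 0\le i\le m-1\}$ together with $C=\langle\beta^m\rangle=\langle\theta^{ml}\rangle$ generate $H = \langle\beta\rangle$, so the sum over $i$ ``spreads'' $T$ around the coset structure of $H$ in $\mathbb{F}_q^*$.

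Next I would compute $T$ more explicitly. Writing $\beta^l C - \beta^l C$ (a multiset of size $l^2\cdot|C|^2$... no, size $|C|^2$ as a difference of two size-$|C|$ sets, but we want it over all $m$ shifts) — the honest bookkeeping is: $\sum_{i=0}^{m-1}A_{i+1}A_i^{(-1)} = \sum_{i=0}^{m-1}\sum_{a\in C}\sum_{b\in C}(\beta^{(i+1)l}a - \beta^{il}b) = \sum_{i=0}^{m-1}\beta^{il}\sum_{a,b\in C}(\beta^l a - b)$. Now $\sum_{a,b\in C}(\beta^l a - b) = \sum_{b\in C}\sum_{a\in C}(\beta^l a b^{-1} - 1)\cdot$ — careful, addition not multiplication inside — I would instead factor: for fixed $b$, $\sum_{a\in C}(\beta^l a - b) = b\cdot\sum_{a\in C}(\beta^l a b^{-1} - 1)$, and as $a$ ranges over $C$ so does $ab^{-1}$, giving $b\cdot\sum_{c\in C}(\beta^l c - 1)$. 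Hence $T = \bigl(\sum_{b\in C} b\bigr)\cdot\bigl(\sum_{c\in C}(\beta^l c - 1)\bigr) = C\cdot(\beta^l C - 1)$ in group-ring shorthand (here the outer multiplication is the group-ring multiplication coming from $+$ in $\mathbb{F}_q$, i.e. Minkowski-type). So the total is $\sum_{i=0}^{m-1}\beta^{il}C\cdot(\beta^l C - 1) = \bigl(\sum_{i=0}^{m-1}\beta^{il}C\bigr)(\beta^l C - 1)$. Since the $\beta^{il}C$ for $0\le i\le m-1$ are pairwise disjoint cosets whose union is $H$, this is $H\cdot(\beta^l C - 1)$. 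Writing $\beta^l C = \{\beta^{1+km}: 0\le k\le l-1\}\cdot$ — actually $\beta^l C = \beta\langle\beta^m\rangle = \{\beta^{1+km}:0\le k\le l-1\}$ as a coset-by-coset union is wrong in size; rather $\beta^l C$ has $|C| = (q-1)/(ml)$ elements. Let me restate: $\beta^l C - 1 = \sum_{c\in C}(\beta^l c - 1)$, and as $c$ runs over $C = \langle\theta^{ml}\rangle$, $\beta^l c = \theta^l c$ runs over the coset $\theta^l\langle\theta^{ml}\rangle$; the elements $\beta^{1+km}$ for $0\le k\le l-1$ are coset representatives of $C$ inside $\theta^l C$... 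I would sort this indexing carefully so that $H\cdot(\beta^l C - 1) = G - 0$ becomes exactly the stated condition.

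The cleanest finish: the condition $\sum_{i}A_{i+1}A_i^{(-1)} = G - 0_G$ is equivalent to $H\cdot(\beta^l C - 1) = G - 0_G = (\mathbb{F}_q^* ) + 0 - 0 = \sum_{x\in\mathbb{F}_q^*}x$. Now $\beta^l C - 1 = \sum_{c\in C}(\theta^l c - 1)$; the terms $\theta^l c - 1$ for $c\in C$ — since $C$ is the group of $l$-th powers scaled appropriately, and the distinct values $\beta^{1+km}$, $0\le k\le l-1$, are precisely a transversal of $\langle\beta^{m}\rangle$... hmm. I would argue: $H = \langle\beta\rangle$ has order $(q-1)/l$, index $l$ in $\mathbb{F}_q^*$. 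Multiplying the multiset $\beta^l C - 1$ by $H$ and asking it equals $\mathbb{F}_q^*$ (each nonzero element once) is, by a coset-counting argument, equivalent to: the multiset $\{\beta^{1+km}-1 : 0\le k\le l-1\}$ (one representative of the $H$-action per ``sheet'') hits each of the $l$ cosets of $H$ in $\mathbb{F}_q^*$ exactly once — i.e. it is a complete set of coset representatives of $H$. This is exactly the first claimed equivalence. For the $l=2$ specialization: then $q = 4m+1$, $\beta = \theta^2$, $H = \langle\theta^2\rangle$ = the squares, of index $2$; the set $\{\beta^{1+km}-1:0\le k\le 1\} = \{\theta^2-1,\ \theta^{2+2m}-1\}$, and since $\theta^{2m} = -1$ (as $\theta^{(q-1)/2} = \theta^{2m} = -1$), the second element is $-\theta^2 - 1 = -(\theta^2+1)$. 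So we need $\{\theta^2-1, -(\theta^2+1)\}$ to be a transversal of the squares, i.e. exactly one of them is a square; equivalently their product $-(\theta^2-1)(\theta^2+1) = -(\theta^4-1)$ is a non-square, i.e. (since $-1$ is a square when $q\equiv1\bmod 4$) $\theta^4-1$ is a non-square in $\mathbb{F}_q^*$.

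The main obstacle I anticipate is getting the coset-representative reformulation exactly right: I must be careful that $\beta^l C - 1$ is a genuine set (no repeated elements among $\theta^l c$, clear since $C$ is a group) and that multiplying by $H$ and comparing to $\mathbb{F}_q^*$ forces \emph{both} that there are no collisions (so the $l\cdot|C| = (q-1)$ products are distinct) \emph{and} that they cover everything; disjointness of the cosets $\beta^{il}C$ and the equality $|H|\cdot l = q-1$ make this a clean ``equality of multisets of the right size'' argument, but one must check that an element of $H\cdot(\beta^l C)$ can arise in only one way, which again uses that $C$ is exactly $H\cap C$-structured, i.e. $\beta^{il}C$ partition $H$. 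The only genuinely nontrivial input beyond bookkeeping is the identity $\theta^{2m} = -1$ used in the $l=2$ case, which holds because $2m = (q-1)/2$.
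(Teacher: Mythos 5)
Your argument is essentially the paper's own proof (given there for general $c$ as Theorem~\ref{20231110them2} and specialized to $c=1$): factor each difference as $\beta^{j+mt}(\beta^{1+mk}-1)$, observe that the factors $\beta^{j+mt}$ with $0\le j\le m-1$, $0\le t\le l-1$ sweep out $H=\langle\beta\rangle$ exactly once, so that $\bigcup_{i}\Delta(A_{i+1},A_i)=\bigcup_{k=0}^{l-1}(\beta^{1+mk}-1)H$ and the CEDF condition becomes precisely that the $l$ elements $\beta^{1+mk}-1$ form a transversal of the index-$l$ subgroup $H$, with the $l=2$ case handled identically via $\beta^m=-1$ and $-1$ being a square. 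The one thing to repair is a consistent exponent slip: $A_j=\beta^jC=\theta^{jl}C$, not $\beta^{jl}C$, so the shifts are $\beta^i$ (whose cosets $\beta^iC$, $0\le i\le m-1$, genuinely partition $H$) and the relevant multiset is $\beta C-1$ rather than $\beta^lC-1$ --- taken literally, the cosets $\beta^{il}C$ would fail to partition $H$ whenever $\gcd(l,m)>1$ --- and note that $|C|=(q-1)/(ml)=l$, so your worry that $\{\beta^{1+km}:\ 0\le k\le l-1\}$ has the wrong size to equal $\beta C$ is unfounded.
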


\indent In Remark 4.1 of \cite{veitch2023unconditionally}, the authors mentioned that for $q>7.867\times 10^8$, there exists a primitive element $\theta$ of $\mathbb{F}_q$ such that $\theta^4-1$ is non-square. Later, in \cite[Theorem 2.28]{paterson2023circular}, the authors verified computationally that this result also holds for all primes $13\le q<10^9$ with $q\equiv1\ ({\rm{mod}}\ 4)$. Similar to them, we utilized the renowned software package SageMath to examine all the prime powers within this range and discovered that $25$ is the only exception.

On the other hand, if we allow $c$ to vary instead of fixing it to $1$, then it is much easier to prove that there exists a $(q,m,2,1)$-$c$-CEDF in $(\mathbb{F}_q,+)$ for some $c\in [m-1]$, provided that $q=4m+1\ge 13$. Before that, we first generalize Theorem \ref{20231110them1} to the case of general $c\in [m-1]$.

\begin{theorem}\label{20231110them2}
  Keep the notations in Theorem \ref{20231110them1}. Then for any $c\in [m-1]$, the family $\{A_0,\cdots,A_{m-1}\}$ is a $(q,m,l;1)$-$c$-CEDF in $(\mathbb{F}_q,+)$ if and only if
  $$\{x-1:\ x\in A_c\}=\{\beta^{c+km}-1:\ 0\le k\le l-1\}$$
  is a complete set of coset representatives of $H=\left\langle\beta\right\rangle$ in $\mathbb{F}_q^*$.\\
  \indent If $l=2$ $($and thus $q=4m+1$, $\beta=\theta^2)$, then for any $c\in [m-1]$, the family $\{A_0,\cdots,A_{m-1}\}$ is a $(q,m,2;1)$-$c$-CEDF in $(\mathbb{F}_q,+)$ if and only if $\theta^{4c}-1$ is a non-square element of $\mathbb{F}_q^*$.
\end{theorem}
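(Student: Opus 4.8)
The plan is to establish the first equivalence by a direct computation in the group ring $\mathbb{Z}[\mathbb{F}_q]$ (writing a subset $S\subseteq\mathbb{F}_q$ also for the element $\sum_{s\in S}s$, as in the earlier parts of the paper), and then to read off the $l=2$ statement from it using the quadratic character. In spirit this just re-runs the proof of Theorem \ref{20231110them1} with $c$ in place of $1$: nothing conceptually new is needed, only careful bookkeeping with the coset chain $C\le H\le\mathbb{F}_q^*$, where $H=\langle\beta\rangle$ has order $ml$ and $C=\langle\beta^m\rangle$ has order $l$. First I would note that reading the subscript modulo $m$ causes no ambiguity, since changing the exponent of $\beta$ by a multiple of $m$ does not change the coset $\beta^jC$, so we may simply write $A_{i+c}=\beta^{i+c}C$; that the $A_j$ are pairwise disjoint of size $l$ is immediate. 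By definition the family is a $(q,m,l;1)$-$c$-CEDF exactly when $\sum_{i=0}^{m-1}A_{i+c}A_i^{(-1)}=\mathbb{F}_q-0$ in $\mathbb{Z}[\mathbb{F}_q]$, and expanding the left-hand side gives
$$\sum_{i=0}^{m-1}A_{i+c}A_i^{(-1)}=\sum_{x,y\in C}\ \sum_{i=0}^{m-1}\bigl(\beta^i(\beta^cx-y)\bigr).$$

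The heart of the argument is to collapse this double sum. I would write $\beta^cx-y=y(\beta^cxy^{-1}-1)$ and substitute $z=xy^{-1}\in C$; since $C$ is a group of order $l$, for each fixed $z\in C$ the pairs $(x,y)\in C\times C$ with $xy^{-1}=z$ are exactly the $(zy,y)$ with $y\in C$, so the sum rewrites as $\sum_{z\in C}\sum_{y\in C}\sum_{i=0}^{m-1}\bigl(\beta^iy\,(\beta^cz-1)\bigr)$. Because $C=\{\beta^{mk}:0\le k\le l-1\}$ and $\{1,\beta,\dots,\beta^{m-1}\}$ is a transversal of $C$ in $H$, the element $\beta^iy$ runs over $H$ exactly once as $(i,y)$ runs over $\{0,\dots,m-1\}\times C$; moreover $\beta^cz\in A_c$ and $1\in A_0$ are distinct (as $1\le c\le m-1$), so $\beta^cz-1\ne0$. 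Hence each inner double sum is the indicator of the coset $(\beta^cz-1)H$, and
$$\sum_{i=0}^{m-1}A_{i+c}A_i^{(-1)}=\sum_{z\in C}(\beta^cz-1)H\quad\text{in }\mathbb{Z}[\mathbb{F}_q].$$

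The right-hand side is a sum of $l$ coset indicators with pairwise disjoint supports, while $H$ has exactly $[\mathbb{F}_q^*:H]=(q-1)/(ml)=l$ cosets, whose indicators sum to $\mathbb{F}_q-0$; so the CEDF identity holds iff $z\mapsto(\beta^cz-1)H$ is a bijection from $C$ onto $\mathbb{F}_q^*/H$, i.e. iff $\{\beta^cz-1:z\in C\}=\{\beta^{c+km}-1:0\le k\le l-1\}=\{x-1:x\in A_c\}$ is a complete set of coset representatives of $H$. This is the first assertion. For $l=2$ I would then specialize: now $q=4m+1$, $\beta=\theta^2$, and $H$ is the group of nonzero squares, so this set is a complete system of representatives iff exactly one of $\beta^c-1$ and $\beta^{c+m}-1$ is a square (the two are nonzero and distinct since $\beta$ has order $2m$ and $1\le c\le m-1$). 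From $\beta^{c+m}=\beta^c\beta^m=\beta^c\theta^{lm}=\beta^c\theta^{(q-1)/2}=-\beta^c$ we get $\beta^{c+m}-1=-(\beta^c+1)$, which is a square iff $\beta^c+1$ is, because $-1$ is a square when $q\equiv 1\ (\mathrm{mod}\ 4)$. Hence the condition becomes: exactly one of $\beta^c-1$, $\beta^c+1$ is a square, equivalently (by multiplicativity of the quadratic character) $(\beta^c-1)(\beta^c+1)=\theta^{4c}-1$ is a nonsquare — the claimed criterion, which for $c=1$ is exactly the $\theta^4-1$ condition of Theorem \ref{20231110them1}.

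The only place requiring care is the collapsing step in the second paragraph: one must choose the substitutions so that summing over the rotation index $i$ together with the internal multiplicative structure of $C$ sweeps out all of $H$, turning the double sum into a transparent superposition of coset indicators. Everything after that is formal.
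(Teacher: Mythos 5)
Your proposal is correct and follows essentially the same route as the paper's proof: both collapse $\sum_{i}A_{i+c}A_i^{(-1)}$ into $\bigcup_{k}(\beta^{c+km}-1)H$ by reindexing over the transversal of $C$ in $H$ (your substitution $z=xy^{-1}$ versus the paper's $s=j+mt$ is only a cosmetic difference), and both settle the $l=2$ case by observing that $\beta^m=-1$ and that $-1$ is a square, so that the product $(\beta^c-1)(\beta^{c+m}-1)=1-\theta^{4c}$ is a nonsquare precisely when $\theta^{4c}-1$ is. Your explicit checks that $\beta^cz-1\neq 0$ and that the two elements in the $l=2$ case are nonzero are welcome details the paper leaves implicit.
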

\begin{proof}
  The proof is similar to that of Theorem \ref{20231110them1} given in \cite{veitch2023unconditionally}. Noticing that the order of $\beta$ $($resp., $\beta^m)$ in $\mathbb{F}_q^*$ is $ml$ $($resp., $l)$, and any integer $s$ in $\{0,\cdots,ml-1\}$ can be uniquely written as $s=j+mt$ with $0\le t\le l-1$ and $0\le j\le m-1$, we have
  \begin{align*}
    \displaystyle\bigcup_{j=0}^{m-1}\Delta(A_{j+c},A_j) & =\displaystyle\bigcup_{j=0}^{m-1}\Delta(\beta^{j+c}C,\beta^jC)                                                                               \\
                                                        & =\displaystyle\bigcup_{j=0}^{m-1}\{\{\beta^{j+c}\cdot\beta^{mn_1}-\beta^{j}\cdot\beta^{mn_2}:\ 0\le n_1,n_2\le l-1\}\}                       \\
                                                        & =\displaystyle\bigcup_{j=0}^{m-1}\{\{\beta^{j}\cdot(\beta^m)^{n_2}\cdot\big(\beta^c\cdot(\beta^m)^{n_1-n_2}-1\big):\ 0\le n_1,n_2\le l-1\}\} \\
                                                        & =\displaystyle\bigcup_{j=0}^{m-1}\{\{\beta^{j}\cdot(\beta^m)^{t}\cdot\big(\beta^c\cdot(\beta^m)^{k}-1\big):\ 0\le k,t\le l-1\}\}             \\
                                                        & =\{\{\beta^{j+mt}(\beta^{c+mk}-1):\ 0\le k,t\le l-1,\ 0\le j\le m-1\}\}                                                                      \\
                                                        & =\displaystyle\bigcup_{k=0}^{l-1}(\beta^{c+mk}-1)\cdot\{\beta^s:\ 0\le s\le ml-1 \}                                                          \\
                                                        & = \displaystyle\bigcup_{k=0}^{l-1}(\beta^{c+mk}-1)H.
  \end{align*}
  Hence the family $\{A_0,\cdots,A_{m-1}\}$ is a $(q,m,l;1)$-$c$-CEDF in $(\mathbb{F}_q,+)$, i.e.,
  $$\sum\limits_{j=0}^{m-1}A_{j+c}A_j^{(-1)}=\mathbb{F}_q^*,$$
  if and only if
  $\{\beta^{c+km}:\ 0\le k\le l-1\}$ is a complete set of coset representatives of $H=\left\langle\beta\right\rangle$ in $\mathbb{F}_q^*$.\\
  \indent In particular, if $l=2$, then the family $\{A_0,\cdots,A_{m-1}\}$ is a $(q,m,2;1)$-$c$-CEDF in $(\mathbb{F}_q,+)$ if and only if
  $\{\beta^{c+km}:\ 0\le k\le 1\}=\{\beta^{c}-1,\beta^{c+m}-1\}$ is a complete set of coset representatives of $H=\left\langle\beta\right\rangle=\left\langle\theta^2\right\rangle$ in $\mathbb{F}_q^*$. The latter is equivalent to saying that one of $\beta^c-1$ and $\beta^{c+m}-1$ is a square element of $\mathbb{F}_q^*$ and the other is not, which is further equivalent to saying that their product is a non-square element of $\mathbb{F}_q^*$. Since the order of $\beta$ in $\mathbb{F}_q^*$ is $2m$, we have $\beta^m=-1$ and thus $(\beta^c-1)(\beta^{c+m}-1)=(\beta^c-1)(-\beta^c-1)=1-\beta^{2c}=1-\theta^{4c}$. Since $q\equiv 1\ ({\rm{mod}}\ 4)$, $-1$ is a square element of $\mathbb{F}_q^*$, which implies that $1-\theta^{4c}$ is a non-square element of $\mathbb{F}_q^*$ if and only if $\theta^{4c}-1$ is. This proves the second assertion of this theorem.
\end{proof}

\begin{corollary}\label{20231012corol1}
  Let $q$ be a prime power such that $q=4m+1\ge 13$ with $m\ge 2$, and let $\theta$, $\beta$, $C_i$ $(0\le i\le 2m-1)$ and $A_j$ $(0\le j\le m-1)$ maintain their meanings in Theorem \ref{20231110them1}. Then the family $\{A_0,\cdots,A_{m-1}\}$ is a $(q,m,2;1)$-$c$-CEDF in $(\mathbb{F}_q,+)$ for some $c\in [m-1]$.
\end{corollary}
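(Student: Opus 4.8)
The plan is to use the criterion from Theorem~\ref{20231110them2}. Since $l=2$ here, that theorem says that, for a given $c\in[m-1]$, the family $\{A_0,\dots,A_{m-1}\}$ is a $(q,m,2;1)$-$c$-CEDF in $(\mathbb{F}_q,+)$ if and only if $\theta^{4c}-1$ is a non-square element of $\mathbb{F}_q^*$. Hence it suffices to exhibit a single $c\in[m-1]$ with $\theta^{4c}-1$ a non-square; equivalently, to show that it is \emph{not} the case that $\theta^{4c}-1$ is a square for every $c\in[m-1]$.

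Next I would reformulate this as a statement about cyclotomic numbers of order $4$. Since $q=4m+1\equiv 1\ (\mathrm{mod}\ 4)$ and $m\ge 2$, we may apply the order-$4$ cyclotomy with $e=4$, $f=m$; let $D_0,D_1,D_2,D_3$ be the corresponding cyclotomic classes, so that $D_0=\langle\theta^4\rangle$ is the set of nonzero fourth powers, $D_0\cup D_2$ is the set of nonzero squares, $D_1\cup D_3$ is the set of non-squares, and $|D_i|=m$ for each $i$. Because $\theta^4$ has order $m$, the map $c\mapsto\theta^{4c}$ is a bijection from $\{0,1,\dots,m-1\}$ onto $D_0$ sending $0$ to $1$; therefore $\{\theta^{4c}-1:\ c\in[m-1]\}$ is exactly $\{x-1:\ x\in D_0\setminus\{1\}\}$. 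For each $j$, the substitution $w=x-1$ gives a bijection between $\{x\in D_0:\ x-1\in D_j\}$ and $\{w\in D_j:\ w+1\in D_0\}$ (the value $x=1$ is automatically excluded, since $0\notin D_j$), so $\#\{x\in D_0:\ x-1\in D_j\}=(j,0)_4^{(q)}$. Summing over $j\in\{1,3\}$, the number of $c\in[m-1]$ for which $\theta^{4c}-1$ is a non-square equals $(1,0)_4^{(q)}+(3,0)_4^{(q)}$, and the task reduces to proving this number is positive.

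Finally I would bound $(1,0)_4^{(q)}+(3,0)_4^{(q)}$ via Lemma~\ref{20231012lemma1}. Writing $q=s^2+4t^2$ with $s\equiv 1\ (\mathrm{mod}\ 4)$ as there, the sum equals $\frac{1}{8}(q-3+2s)$ when $q\equiv 1\ (\mathrm{mod}\ 8)$ and $\frac{1}{8}(q-3-2s)$ when $q\equiv 5\ (\mathrm{mod}\ 8)$ (these are the only two residues possible since $q\equiv 1\ (\mathrm{mod}\ 4)$). If this sum were $0$, then $q=3-2s$ or $q=3+2s$, so $q\le 3+2|s|$; combined with $q=s^2+4t^2\ge s^2$ this forces $s^2-2|s|-3\le 0$, i.e.\ $|s|\le 3$, and hence $q\le 9$, contradicting $q\ge 13$. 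Therefore $(1,0)_4^{(q)}+(3,0)_4^{(q)}\ge 1$, completing the argument.

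The main obstacle I anticipate is the second step: setting up the bijections so that the count of admissible $c$ is correctly identified with $(1,0)_4^{(q)}+(3,0)_4^{(q)}$ — one must be careful that $D_0$ contains $1$ and that $x=1$ contributes nothing. Once this identification is in place, the final estimate is a brief arithmetic argument, and it automatically covers $q=25$ (the value excluded in the $c=1$ case discussed above): there $q\equiv 1\ (\mathrm{mod}\ 8)$, $s=-3$, and $(1,0)_4^{(25)}+(3,0)_4^{(25)}=\frac{1}{8}(25-3-6)=2>0$.
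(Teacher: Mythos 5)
Your proposal is correct and follows essentially the same route as the paper: reduce via Theorem~\ref{20231110them2} to finding $c$ with $\theta^{4c}-1$ non-square, identify the count of such $c$ with $(1,0)_4^{(q)}+(3,0)_4^{(q)}$, and show this is positive using Lemma~\ref{20231012lemma1}. The only (harmless) difference is cosmetic: your final step handles $p\equiv 1$ and $p\equiv 3\ (\mathrm{mod}\ 4)$ uniformly via the inequality $s^2\le q\le 3+2|s|$, whereas the paper treats the two cases separately; your careful justification of the bijection step and the exclusion of $x=1$ matches what the paper leaves implicit.
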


\begin{proof}
  By Theorem \ref{20231110them2}, it suffices to show that there exists $c\in[m-1]$ such that $\theta^{4c}-1$ is a non-square element of $\mathbb{F}_q^*$. Let $\alpha=\theta^{4}$ and let $E_i=\theta^i\left\langle\alpha\right\rangle$ $(0\le i\le 3)$ be the cyclotomic classes of $\mathbb{F}_q$ of order $4$ (with respect to $\theta$). Then
  \begin{align*}
    \#\{c\in [m-1]:\ \theta^{4c}-1\ \mbox{is non-square}\} & =\#\{c\in [m-1]:\ \alpha^{c}-1\in E_1\cup E_3\} \\
                                                           & =\#\{x\in E_0:\ x-1\in E_1\cup E_3\}            \\
                                                           & =(1,0)_4^{(q)}+(3,0)_4^{(q)}.
  \end{align*}
  \indent Assume that $q=p^n$ where $p$ is an odd prime and $n\in\mathbb{N}_+$. If $q\equiv 1\ ({\rm{mod}}\ 8)$, then by Lemma \ref{20231012lemma1}, there exist  integers $s$ and $t$ such that
  \begin{align*}
    (1,0)_4^{(q)}+(3,0)_4^{(q)} & =\frac{1}{16}(q-3+2s+8t)+\frac{1}{16}(q-3+2s-8t) \\
                                & =\frac{1}{8}(q-3+2s).
  \end{align*}
  If $q\equiv 5\ ({\rm{mod}}\ 8)$, then by Lemma \ref{20231012lemma1}, there exists an integer $s$ such that
  \begin{align*}
    (1,0)_4^{(q)}+(3,0)_4^{(q)} & =\frac{1}{16}(q-3-2s)+\frac{1}{16}(q-3-2s) \\
                                & =\frac{1}{8}(q-3-2s).
  \end{align*}
  In either case, we have
  $$(1,0)_4^{(q)}+(3,0)_4^{(q)}=\frac{1}{8}(q-3\pm 2s).$$
  \begin{enumerate}[label=(\arabic*)]
    \item If $p\equiv 1\ ({\rm{mod}}\ 4)$, then the above integers $s$ and $t$ satisfy that $q=s^2+4t^2$, $p\nmid st$ (in particular, $t\ne 0$) and $s\equiv 1\ ({\rm{mod}}\ 4)$. It follows that
          \begin{align*}
            (1,0)_4^{(q)}+(3,0)_4^{(0)}=0 & \iff q-3\pm 2s=0                                                         \\
                                          & \iff (s\pm 1)^2+4t^4=4                                                   \\
                                          & \iff s\pm 1=0,\ t\in\{\pm 1\}\quad(\mbox{since}\ t\ne 0)                 \\
                                          & \iff s=1,\ t\in\{\pm 1\} \quad(\mbox{since}\ s\equiv 1\ ({\rm{mod}}\ 4)) \\
                                          & \iff q=5,
          \end{align*}
          which contradicts the assumption that $q\ge 13$.
    \item If $p\equiv 3\ ({\rm{mod}}\ 4)$, then $s\in\{\pm p^{n/2}\}$. It follows that
          \begin{align*}
            (1,0)_4^{(q)}+(3,0)_4^{(0)}=0 & \iff q-3\pm 2s=0         \\
                                          & \iff (\sqrt{q}\pm 1)^2=4 \\
                                          & \iff q=9,
          \end{align*}
          which contradicts the assumption that $q\ge 13$.
  \end{enumerate}
  Hence in all cases, we have $(1,0)_4^{(q)}+(3,0)_4^{(q)}\ge 1$. This completes the proof.
\end{proof}

Theorem \ref{20231110them2} can be generalized to the case of general subset $S$ of $[m-1]$. The proof is similar, so we omit it.

\begin{theorem}\label{20231012them3}
  Keep the notations in Theorem \ref{20231110them1}. Then for any non-empty subset $S$ of $[m-1]$, the family $\{A_0,\cdots,A_{m-1}\}$ is a $(q,m,l;|S|)$-$S$-CEDF in $(\mathbb{F}_q,+)$ if and only if for any coset of $H=\left\langle\beta\right\rangle$ in $\mathbb{F}_q^*$, there are exactly $|S|$ elements in the following set
  $$\displaystyle\bigcup_{c\in S}\{x-1:\ x\in A_c\}=\{\beta^{c+km}-1:\ 0\le k\le l-1,\ c\in S\}$$
  that belong to this coset.
\end{theorem}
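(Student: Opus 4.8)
The plan is to mimic the proof of Theorem~\ref{20231110them2} verbatim, simply summing the key multiset identity over all $c\in S$. Indeed, the chain of equalities in that proof establishes, for each fixed $c\in[m-1]$, the identity
$$\bigcup_{j=0}^{m-1}\Delta(A_{j+c},A_j)=\bigcup_{k=0}^{l-1}(\beta^{c+mk}-1)H$$
of multisets of elements of $\mathbb{F}_q^*$, where $H=\langle\beta\rangle$ has order $\mathrm{ord}(\beta)=ml$. Summing over $c\in S$ and passing to the group ring $\mathbb{Z}[(\mathbb{F}_q,+)]$, and noting that the necessary condition $|S|\cdot ml^2=\lambda(q-1)$ together with $q-1=ml^2$ forces $\lambda=|S|$, we see that $\{A_0,\dots,A_{m-1}\}$ is a $(q,m,l;|S|)$-$S$-CEDF if and only if
$$\sum_{c\in S}\sum_{k=0}^{l-1}(\beta^{c+mk}-1)H=|S|\cdot\mathbb{F}_q^*.$$

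Before translating this into the stated condition I would record two elementary observations. First, since $\mathrm{ord}(\beta)=ml$, the index $[\mathbb{F}_q^*:H]=(q-1)/(ml)=l$, so $\mathbb{F}_q^*$ is the disjoint union of exactly $l$ cosets of $H$; accordingly the right-hand side above is $|S|$ times the sum of these $l$ cosets in the group ring. Second, for every $c\in S\subseteq[m-1]$ and every $0\le k\le l-1$ we have $c+mk\not\equiv 0\ (\mathrm{mod}\ ml)$, since already $1\le c\le m-1$ prevents $c+mk\equiv 0\ (\mathrm{mod}\ m)$; hence $\beta^{c+mk}\ne 1$ and $\beta^{c+mk}-1\ne 0$. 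Moreover the exponents $c+mk$ are pairwise distinct modulo $ml$ as $(c,k)$ ranges over $S\times\{0,\dots,l-1\}$, so the elements $\beta^{c+mk}-1$ are pairwise distinct and the set $\bigcup_{c\in S}\{x-1:\ x\in A_c\}=\{\beta^{c+mk}-1:\ 0\le k\le l-1,\ c\in S\}$ has exactly $|S|\cdot l$ elements; in particular each factor $(\beta^{c+mk}-1)H$ appearing on the left above is a genuine coset of $H$.

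Granted these facts, the equivalence falls out by comparing coefficients. For a fixed $g\in\mathbb{F}_q^*$, the coefficient of $g$ on the left of the displayed group-ring equation equals $\#\{(c,k)\in S\times\{0,\dots,l-1\}:\ (\beta^{c+mk}-1)H=gH\}$, which is precisely the number of elements of $\{\beta^{c+mk}-1:\ 0\le k\le l-1,\ c\in S\}$ lying in the coset $gH$, while the coefficient of $g$ on the right is $|S|$. Since every element of $\mathbb{F}_q^*$ lies in one of the $l$ cosets of $H$, the equation therefore holds if and only if each coset of $H$ in $\mathbb{F}_q^*$ contains exactly $|S|$ of the elements $\beta^{c+mk}-1$, which is exactly the asserted condition (and the count is consistent, there being $l$ cosets each required to contain $|S|$ of the $|S|\cdot l$ elements).

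There is no genuine obstacle: the whole argument is a routine adaptation of the proof of Theorem~\ref{20231110them2}, and the only points meriting a moment's attention are the two bookkeeping observations above, namely that $H$ has exactly $l$ cosets in $\mathbb{F}_q^*$ and that none of the differences $\beta^{c+mk}-1$ vanish, both of which are immediate from $\mathrm{ord}(\beta)=ml$ and $q-1=ml^2$. This is no doubt why the authors write that the proof is similar and omit it.
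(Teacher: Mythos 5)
Your proposal is correct and is exactly the argument the paper has in mind: the paper omits the proof of this theorem, stating only that it is "similar" to that of Theorem \ref{20231110them2}, and your write-up carries out precisely that adaptation (summing the multiset identity $\bigcup_{j}\Delta(A_{j+c},A_j)=\bigcup_{k}(\beta^{c+mk}-1)H$ over $c\in S$ and comparing coset multiplicities). The two bookkeeping points you flag --- that $H$ has exactly $l$ cosets and that no $\beta^{c+mk}-1$ vanishes --- are the right ones to check, and your verification of both is sound.
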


\begin{table}[htbp]
  \centering
  \caption{The remainders $2^i({\rm{mod}}\ 37)$ $(0\le i\le 18)$}\label{20231012table1}
  \begin{tabular}{ccccccccccc}
    \toprule
    $i$                   & $0$  & $1$  & $2$  & $3$  & $4$  & $5$  & $6$  & $7$  & $8$                \\
    \midrule
    $2^i({\rm{mod}}\ 37)$ & $1$  & $2$  & $4$  & $8$  & $16$ & $32$ & $27$ & $17$ & $34$               \\
    \midrule
    $i$                   & $9$  & $10$ & $11$ & $12$ & $13$ & $14$ & $15$ & $16$ & $17$ & $18$        \\
    \midrule
    $2^i({\rm{mod}}\ 37)$ & $31$ & $25$ & $13$ & $26$ & $15$ & $30$ & $23$ & $9$  & $18$ & $36\ (=-1)$ \\
    \bottomrule
  \end{tabular}
\end{table}
\begin{example}
  Take $q=37$, $m=4$ and $l=3$. Then $q=ml^2+1$. The remainders $2^i({\rm{mod}}\ 37)$ $(0\le i\le 18)$ are listed in Table \ref{20231012table1}. It is clear that $\theta=2$ is a primitive element in $\mathbb{F}_{q}$. Put $\beta=\theta^l=8$, $H=\left\langle\beta\right\rangle=\left\langle 8\right\rangle$, $C=\left\langle\beta^m\right\rangle=\left\langle 2^{12}\right\rangle=\{1,2^{12},2^{24}\}=\{1,26,10\}$ and $A_j=\beta^jC$ $(0\le j\le 3)$. Then
  \begin{align*}
    A_0=\{1,26,10\},\quad A_1=\{8,23,6\},\quad A_2=\{27,36,11\},\quad A_3=\{31,29,14\}.
  \end{align*}
  Let $\chi:\mathbb{F}_{q}^*\rightarrow\mathbb{C}$ be the cubic character of $\mathbb{F}_{q}^*$ which maps $2$ to $\omega=\exp(2\pi{\rm{i}}/3)$. Then for any $x\in\mathbb{F}_q^*$, we have
  $$\chi(x)=\begin{cases}
      1,        & x\in H=\{\pm 1,\pm 6,\pm8,\pm10,\pm11,\pm14\},  \\
      \omega,   & x\in 2H=\{\pm 2,\pm9,\pm12,\pm15,\pm16,\pm17\}, \\
      \omega^2, & x\in 4H=\{\pm 3,\pm4,\pm5,\pm7,\pm13,\pm18\}.
    \end{cases}$$
  By Theorem \ref{20231012them3}, for any non-empty subset $S\subset [m-1]=\{1,2,3\}$, the family $\{A_0,A_1,A_2,A_3\}$ is a $(37,4,l;|S|)$-$S$-CEDF if and only if for any coset of $H$ in $\mathbb{F}_q^*$, there are exactly $|S|$ elements in the following set
  $$\displaystyle\bigcup_{c\in S}\{x-1:\ x\in A_c\}=\{8^{3k+c}-1:\ 0\le k\le 2,\ c\in S\}$$
  that belong to this coset. The latter is equivalent to saying that the multiset
  $$\{\{\chi(x-1):\ x\in A_c,\ c\in S\}\}$$
  is the union of $|S|$ copies of $\{1,\omega,\omega^2\}$.
  Since
  \begin{align*}
     & \mathcal{A}_1=\{\{\chi(x-1)|\ x\in A_1\}\}=\{\{\chi(7),\chi(22),\chi(5)\}\}=\{\{\omega^2,\omega,\omega^2\}\},   \\
     & \mathcal{A}_2=\{\{\chi(x-1)|\ x\in A_2\}\}=\{\{\chi(26),\chi(35),\chi(10)\}\}=\{\{1,\omega,1\}\},               \\
     & \mathcal{A}_3=\{\{\chi(x-1)|\ x\in A_3\}\}=\{\{\chi(30),\chi(28),\chi(13)\}\}=\{\{\omega^2,\omega,\omega^2\}\},
  \end{align*}
  the family $\{A_0,A_1,A_2,A_3\}$ is not a $(37,4,l;1)$-$c$-CEDF for any $c\in\{1,2,3\}$. On the other hand, since
  $$\mathcal{A}_1\cup\mathcal{A}_2=\mathcal{A}_2\cup\mathcal{A}_3=\{\{1,1,\omega,\omega,\omega^2,\omega^2\}\},$$
  the family $\{A_0,A_1,A_2,A_3\}$ is a $(37,4,l;2)$-$|S|$-CEDF with $S=\{1,2\}$ or $\{2,3\}$. Example 4.7 in \cite{veitch2023unconditionally} is exactly the case where $S=\{1,2\}$.
\end{example}

In the above cyclotomic construction, the chosen sets $A_j=C_{jl}$ $(0\le j\le m-1)$ are exactly the cyclotomic classes of $\mathbb{F}_q$ of order $ml$ whose indices form the arithmetic sequence $0$, $l$, $2l$, $\cdots$, $(m-1)l$. If we replace them with any $m$ distinct cyclotomic classes of order $ml$, then the following theorem tells us when the chosen family is a CEDF.

\begin{theorem}
  Let $q$ be a prime power such that $q-1=ml^2$ with $l$, $m\ge 2$,  $\theta$ be a primitive element in $\mathbb{F}_q$,  $\beta=\theta^l$, $C=\left\langle\beta^m\right\rangle=\left\langle\theta^{ml}\right\rangle$ and  $C_i=\theta^iC$ $(0\le i\le ml-1)$ be the cyclotomic classes of $\mathbb{F}_q$ of order $ml$ $($with respect to $\theta)$.\\
  \indent Assume that $\sigma:\mathbb{Z}_m\rightarrow\mathbb{Z}_{ml}$ is an injective mapping. For any $0\le j\le m-1$, put $D_{j}=\theta^{\sigma(j)}C=C_{\sigma(j)}$. Then for any non-empty subset $S\subset [m-1]$, the family $\{D_0,\cdots,D_{m-1}\}$ is a $(q,m,l;|S|)$-$S$-CEDF in $(\mathbb{F}_q,+)$ if and only if for any coset of $C$ in $\mathbb{F}_q^*$, there are exactly $|S|$ elements in the following multiset
  $$\{\{\theta^{\sigma(j+c)}-\theta^{\sigma(j)}\beta^{mr}:\ 0\le j\le l-1,\ 0\le r\le m-1,\ c\in S\}\}$$
  that belong to this coset.
\end{theorem}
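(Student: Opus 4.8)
The plan is to imitate the proof of Theorem~\ref{20231110them2}: reduce the defining multiset identity of an $S$-CEDF to a statement about how the relevant difference elements distribute among the cosets of $C$ in $\mathbb{F}_q^{*}$, and then simplify by a change of coset representatives. First I would dispose of the trivial parts of the definition. Since $\sigma$ is injective and every $c\in S$ lies in $[m-1]$, the classes $D_0=C_{\sigma(0)},\dots,D_{m-1}=C_{\sigma(m-1)}$ are pairwise distinct cosets of $C$; hence they are pairwise disjoint, each of cardinality $|C|=l$, and $D_{j+c}\cap D_j=\emptyset$ for all $j$ and all $c\in S$, so $0$ never occurs in any $\Delta(D_{j+c},D_j)$. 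Thus the entire content of the assertion is the group-ring identity $\sum_{c\in S}\sum_{j=0}^{m-1}D_{j+c}D_j^{(-1)}=|S|\cdot\mathbb{F}_q^{*}$, and comparing cardinalities ($|S|\cdot m\cdot l^{2}=|S|(q-1)$, using $q-1=ml^{2}$) confirms that this is the correct normalization.

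The core step is the explicit expansion of $\Delta(D_{j+c},D_j)$. Writing $C=\{\beta^{mk}:0\le k\le l-1\}$ (the element $\beta^{m}$ having order exactly $l$ in $\mathbb{F}_q^{*}$), I would compute, just as in Theorem~\ref{20231110them2},
\[
\Delta(D_{j+c},D_j)=\{\{\theta^{\sigma(j+c)}\beta^{mk_1}-\theta^{\sigma(j)}\beta^{mk_2}:0\le k_1,k_2\le l-1\}\},
\]
factor $\beta^{mk_2}$ out of each term, and use that $(k_1,k_2)\mapsto((k_1-k_2)\bmod l,\ k_2)$ is a bijection of $\{0,\dots,l-1\}^{2}$ together with $\{\beta^{mt}:0\le t\le l-1\}=C$ to obtain
\[
\Delta(D_{j+c},D_j)=\bigcup_{k=0}^{l-1}\bigl(\theta^{\sigma(j+c)}\beta^{mk}-\theta^{\sigma(j)}\bigr)C .
\]
Consequently $\bigcup_{c\in S}\bigcup_{j=0}^{m-1}\Delta(D_{j+c},D_j)$ is, as a multiset, a disjoint union of $|S|ml$ full cosets of $C$, the one indexed by $(c,j,k)$ being $(\theta^{\sigma(j+c)}\beta^{mk}-\theta^{\sigma(j)})C$. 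Since $\mathbb{F}_q^{*}$ is a disjoint union of exactly $ml$ cosets of $C$, this multiset equals $|S|\cdot\mathbb{F}_q^{*}$ if and only if each coset of $C$ is hit by exactly $|S|$ of these blocks, equivalently if and only if, for each coset of $C$, exactly $|S|$ of the elements $\theta^{\sigma(j+c)}\beta^{mk}-\theta^{\sigma(j)}$ ($c\in S$, $0\le j\le m-1$, $0\le k\le l-1$) lie in it.

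It remains to tidy up that element set. Multiplying $\theta^{\sigma(j+c)}\beta^{mk}-\theta^{\sigma(j)}$ by $\beta^{-mk}\in C$ leaves its coset modulo $C$ unchanged and gives $\theta^{\sigma(j+c)}-\theta^{\sigma(j)}\beta^{-mk}$; as $k$ ranges over $\{0,\dots,l-1\}$ so does $r:=(-k)\bmod l$, so the multiset of cosets is unaffected if we replace each difference by $\theta^{\sigma(j+c)}-\theta^{\sigma(j)}\beta^{mr}$ with $0\le r\le l-1$, which is exactly the multiset in the statement. I do not expect any real obstacle here beyond careful multiset bookkeeping---this is the same mechanism that made the analogous Theorem~\ref{20231012them3} routine; the only points genuinely demanding attention are that the reindexing $(k_1,k_2)\mapsto((k_1-k_2)\bmod l,k_2)$ really is a bijection (precisely the statement that $\beta^{m}$ has order $l$), that multiplying a coset representative by an element of $C$ is harmless, and that no difference vanishes (which is the pairwise disjointness of the $D_j$). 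One remark worth including is that, in contrast with the special case $\sigma(j)=jl$ of Theorem~\ref{20231012them3}---where $\bigcup_j D_j=\langle\beta\rangle$ is a subgroup and the criterion can be phrased with cosets of $\langle\beta\rangle$ (index $l$)---for a general injection $\sigma$ the union $\bigcup_j D_j$ carries no subgroup structure, which is why the criterion must be expressed in terms of cosets of $C$ (index $ml$).
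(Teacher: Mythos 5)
Your proposal is correct and follows essentially the same route as the paper's own proof: expand each $\Delta(D_{j+c},D_j)$, factor out an element of $C$ (your reindexing $(k_1,k_2)\mapsto((k_1-k_2)\bmod l,\,k_2)$ is exactly the paper's substitution $z=yx^{-1}$), and read off the criterion as an equidistribution of the resulting coset representatives among the $ml$ cosets of $C$. The only differences are cosmetic bookkeeping (you factor out the second term and then renormalize by $\beta^{-mk}$, whereas the paper factors out the first term directly), plus your correct observation that the index ranges for $j$ and $r$ in the displayed multiset should read $0\le j\le m-1$, $0\le r\le l-1$, matching what both proofs actually produce.
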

\begin{proof}
  Since $\sigma:\mathbb{Z}_m\rightarrow\mathbb{Z}_{ml}$ is injective, $D_j=\theta^{\sigma(j)}C$ $(0\le j\le m-1)$ are pairwise disjoint. We have
  \begin{align*}
    \displaystyle\bigcup_{c\in S}\displaystyle\bigcup_{j=0}^{m-1}\Delta(D_{j+c},D_j) & =\displaystyle\bigcup_{c\in S}\displaystyle\bigcup_{j=0}^{m-1}\Delta(C_{\sigma(j+c)},D_{\sigma(j)})                                         \\
                                                                                     & =\displaystyle\bigcup_{c\in S}\displaystyle\bigcup_{j=0}^{m-1}\{\theta^{\sigma(j+c)}x-\theta^{\sigma(j)}y:\ x,y\in C\}                      \\
                                                                                     & = \displaystyle\bigcup_{c\in S}\displaystyle\bigcup_{j=0}^{m-1}\{(\theta^{\sigma(j+c)}-\theta^{\sigma(j)}yx^{-1})x:\ x,y\in C\}             \\
                                                                                     & = \displaystyle\bigcup_{c\in S}\displaystyle\bigcup_{j=0}^{m-1}\{(\theta^{\sigma(j+c)}-\theta^{\sigma(j)}z)x:\ x,z\in C\}                   \\
                                                                                     & =\displaystyle\bigcup_{c\in S}\displaystyle\bigcup_{j=0}^{m-1}\{\theta^{\sigma(j+c)}-\theta^{\sigma(j)}z:\ z\in C\}\cdot C                  \\
                                                                                     & =\displaystyle\bigcup_{c\in S}\displaystyle\bigcup_{j=0}^{m-1}\{\theta^{\sigma(j+c)}-\theta^{\sigma(j)}\beta^{mr}:\ 0\le r\le l-1\}\cdot C.
  \end{align*}
  Hence the family $\{D_0,\cdots,D_{m-1}\}$ is a $(q,m,l;|S|)$-$S$-CEDF in $(\mathbb{F}_q,+)$, i.e.,
  $$\sum\limits_{c\in S}\sum\limits_{j=0}^{m-1}D_{j+c}D_j^{(-1)}=|S|\cdot\mathbb{F}_q^*,$$
  if and only if for any coset of $C$ in $\mathbb{F}_q^*$, there are exactly $|S|$ elements in the following multiset
  $$\{\{\theta^{\sigma(j+c)}-\theta^{\sigma(j)}\beta^{mr}:\ 0\le j\le m-1,\ 0\le r\le l-1,\ c\in S\}\}$$
  that belong to this coset.
\end{proof}

\begin{example}
  Take $q=19$, $m=2$ and $l=3$. Then $q=ml^2+1$. The remainders $2^i({\rm{mod}}\ 19)$ $(0\le i\le 9)$ are listed in Table \ref{20231012table2}. It is clear that $\theta=2$ is a primitive element in $\mathbb{F}_{q}$. Put $\beta=\theta^l=8$, $H=\left\langle\beta\right\rangle=\left\langle 8\right\rangle$, $C=\left\langle\beta^m\right\rangle=\left\langle 2^{6}\right\rangle=\{1,2^{6},2^{12}\}=\{1,7,11\}$, $C_i=\theta^iC$ $(0\le i\le 5)$ and $A_j=\beta^jC$ $(0\le j\le 1)$. Then
  \begin{align*}
    A_0=\{1,7,11\},\quad C_1=\{2,14,3\},\quad A_1=\{8,18,12\}.
  \end{align*}
  and
  \begin{align*}
    \Delta(A_0,A_1) & =\{x-y:\ x\in\{1,7,11\},\ y\in\{8,18,12\}\}  \\
                    & =\{2,3,8,8,12,12,14,18,18\},                 \\
    \Delta(A_1,A_0) & =-\Delta(A_0,A_1)=\{17,16,11,11,7,7,5,1,1\}.
  \end{align*}
  Since $\Delta(A_0,A_1)\cup\Delta(A_1,A_0)\ne\mathbb{F}_{19}^*$, the family $\{A_0,A_1\}$ is not a $(19,2,3;1)$-$1$-CEDF in $(\mathbb{F}_{19},+)$.\\
  \indent On the other hand, if we take $D_0=C=\{1,7,11\}$ and $D_1=C_1=\{2,14,3\}$, then
  \begin{align*}
    \Delta(D_0,D_1) & =\{x-y:\ x\in\{1,7,11\},\ y\in\{2,14,3\}\}    \\
                    & =\{4,5,6,8,9,12,16,17,18\},                   \\
    \Delta(D_1,D_0) & =-\Delta(D_0,D_1)=\{15,14,13,11,10,7,3,2,1\}.
  \end{align*}
  Since $\Delta(D_0,D_1)\cup\Delta(D_1,D_0)=\mathbb{F}_{19}^*$, the family $\{D_0,D_1\}$ is a $(19,2,3;1)$-$1$-CEDF in $(\mathbb{F}_{19},+)$.

  \begin{table}[htbp]
    \centering
    \caption{The remainders $2^i({\rm{mod}}\ 19)$ $(0\le i\le 9)$}\label{20231012table2}
    \begin{tabular}{ccccccccccc}
      \toprule
      $i$                   & $0$ & $1$ & $2$ & $3$ & $4$  & $5$  & $6$ & $7$  & $8$ & $9$         \\
      \midrule
      $2^i({\rm{mod}}\ 19)$ & $1$ & $2$ & $4$ & $8$ & $16$ & $13$ & $7$ & $14$ & $9$ & $18\ (=-1)$ \\
      \bottomrule
    \end{tabular}
  \end{table}
\end{example}

We conclude this section with a lifting theorem on CEDFs.

\begin{theorem}
  Let $q$ be a prime power such that $q-1=ml^2$ with $l$, $m\ge 2$, $\theta$ be a primitive element in $\mathbb{F}_q$, $\beta=\theta^l$,  $H=\left\langle\beta\right\rangle$,  $C=\left\langle\beta^m\right\rangle$ and $A_j=\beta^jC$ $(0\le j\le m-1)$.\\
  \indent Let $Q=q^t$ with $t\in\mathbb{N}_+$, $M=(Q-1)/l^2$, $\tilde{\theta}$ be a primitive element in $\mathbb{F}_Q$ such that $\theta=\tilde{\theta}^{(Q-1)/(q-1)}=\tilde{\theta}^{M/m}$, $\tilde{\beta}=\tilde{\theta}^l$, $\tilde{H}=\left\langle\tilde{\beta}\right\rangle$, $\tilde{C}=\left\langle\beta^M\right\rangle$ and $\tilde{A}_k=\tilde{\beta}^k\tilde{C}$ $(0\le k\le M-1)$.\\
  \indent Assume further that $\rm{gcd}(l,t)=1$. Then for any non-empty subset $S\subset[m-1]$, the family $\{A_0,\cdots,A_{m-1}\}$ is a $(q,m,l;|S|)$-$S$-CEDF in $(\mathbb{F}_q,+)$ if and only if the family $\{\tilde{A}_0,\cdots,\tilde{A}_{M-1}\}$ is a $(Q,M,l;|\tilde{S}|=|S|)$-$\tilde{S}$-CEDF in $(\mathbb{F}_Q,+)$, where $\tilde{S}=\frac{M}{m}S=\{\frac{M}{m}c:\ c\in S\}\subset [M-1]$.
\end{theorem}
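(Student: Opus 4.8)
The plan is to feed both families into the equidistribution criterion of Theorem~\ref{20231012them3} and then to observe that the two resulting conditions are the \emph{same} statement about one fixed multiset, transported along a bijection between the $l$ cosets of $H$ in $\mathbb F_q^{*}$ and the $l$ cosets of $\tilde H$ in $\mathbb F_Q^{*}$ that exists precisely when $\gcd(l,t)=1$. Everything rests on a single computation: since $(Q-1)/(q-1)=M/m$ and $\theta=\tilde\theta^{M/m}$,
$$\tilde\beta^{M/m}=(\tilde\theta^{l})^{M/m}=(\tilde\theta^{M/m})^{l}=\theta^{l}=\beta,$$
and raising this to the $m$-th and to the $c$-th power gives $\tilde\beta^{M}=\beta^{m}$ and $\tilde\beta^{(M/m)c}=\beta^{c}$ for every integer $c$, together with $H=\langle\beta\rangle\subseteq\langle\tilde\beta\rangle=\tilde H$. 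I would also record the elementary congruence $M/m=(q^{t}-1)/(q-1)\equiv t\pmod{l}$, valid since $l\mid q-1$, which yields $\gcd(M/m,l)=\gcd(t,l)=1$; separately, $c\mapsto(M/m)c$ is injective modulo $M$ on $[m-1]$ because $M=(M/m)m$, so $\tilde S\subseteq[M-1]$ with $|\tilde S|=|S|$ (and the necessary parameter count then forces index $|S|$ on the $\mathbb F_Q$ side).

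Next I would identify the two multisets. By Theorem~\ref{20231012them3} over $(\mathbb F_q,+)$, the family $\{A_0,\dots,A_{m-1}\}$ is a $(q,m,l;|S|)$-$S$-CEDF iff every coset of $H$ in $\mathbb F_q^{*}$ contains exactly $|S|$ elements, counted with multiplicity, of
$$T:=\{\{\beta^{c+km}-1:\ 0\le k\le l-1,\ c\in S\}\};$$
recall $\beta$ has order $ml$, so every entry of $T$ lies in $\mathbb F_q^{*}$ because $c+km$ runs over $\{1,\dots,ml-1\}$. Applying the same theorem over $(\mathbb F_Q,+)$ with parameters $(Q,M,l)$ and difference set $\tilde S=\frac{M}{m}S$, the family $\{\tilde A_0,\dots,\tilde A_{M-1}\}$ is a $(Q,M,l;|S|)$-$\tilde S$-CEDF iff every coset of $\tilde H$ in $\mathbb F_Q^{*}$ contains exactly $|S|$ elements of the multiset with entries $\tilde\beta^{(M/m)c+kM}-1$ ($0\le k\le l-1$, $c\in S$); but $\tilde\beta^{(M/m)c+kM}=\tilde\beta^{(M/m)c}(\tilde\beta^{M})^{k}=\beta^{c}(\beta^{m})^{k}=\beta^{c+km}$ by the identities above, so this multiset is exactly $T$, now viewed inside $\mathbb F_Q^{*}\supseteq\mathbb F_q^{*}$.

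Finally I would match the cosets. The inclusion $\mathbb F_q^{*}\hookrightarrow\mathbb F_Q^{*}$ induces a homomorphism $\mathbb F_q^{*}/H\to\mathbb F_Q^{*}/\tilde H$, well-defined since $H\subseteq\tilde H$, between cyclic groups of the same order $l$; writing $x=\theta^{a}=\tilde\theta^{(M/m)a}$ shows it is multiplication by $M/m$ on $\mathbb Z_{l}$, hence a bijection because $\gcd(M/m,l)=1$. Therefore, for each $x\in T$ its $H$-coset in $\mathbb F_q^{*}$ and its $\tilde H$-coset in $\mathbb F_Q^{*}$ correspond under this bijection, so ``$T$ meets every $H$-coset in exactly $|S|$ elements'' is equivalent to ``$T$ meets every $\tilde H$-coset in exactly $|S|$ elements''; combined with the two applications of Theorem~\ref{20231012them3}, this is precisely the asserted equivalence. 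I do not expect a genuinely hard step: the only real care needed is bookkeeping — keeping the exponents straight so that the $\mathbb F_Q$-multiset collapses onto $T$ — and isolating $M/m\equiv t\pmod l$ as the one point where the hypothesis $\gcd(l,t)=1$ enters.
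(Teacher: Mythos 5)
Your proposal is correct and follows essentially the same route as the paper: apply the criterion of Theorem~\ref{20231012them3} to both families, observe via $\tilde\beta^{M/m}=\beta$ that the two multisets of differences coincide, and check that the natural embedding induces an isomorphism $\mathbb F_q^*/H\to\mathbb F_Q^*/\tilde H$ because $M/m\equiv t\pmod l$ and $\gcd(l,t)=1$. The only cosmetic difference is that you describe the induced map explicitly as multiplication by $M/m$ on $\mathbb Z_l$, whereas the paper argues via the order of $\theta$ in $\mathbb F_Q^*/\tilde H$; these are the same computation.
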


\begin{proof}
  Put
  $$B=\{\beta^{c+km}-1:\ 0\le k\le l-1,\ c\in S\}$$
  and
  $$\tilde{B}=\{\tilde{\beta}^{c+kM}-1:\ 0\le k\le l-1,\ \tilde{c}\in\tilde{S}\}.$$
  By Theorem \ref{20231012them3}, the family $\{A_0,\cdots,A_{m-1}\}$ $($resp., $\{\tilde{A}_0,\cdots,\tilde{A}_{m-1}\})$ is a $(q,m,l;|S|)$-$S$-CEDF $($resp., $(q,m,l;|\tilde{S}|)$-$\tilde{S}$-CEDF) in $(\mathbb{F}_q,+)$ $($resp., $(\mathbb{F}_Q,+))$ if and only if for any coset of $H$ $($resp., $\tilde{H})$ in $\mathbb{F}_q^*$ $($resp., $\mathbb{F}_Q^*)$, there are exactly $|S|=|\tilde{S}|$ elements in the set $B$ $($resp., $\tilde{B})$ that belong to this coset. Since
  \begin{align*}
    B & =\{\beta^{c+km}-1:\ 0\le k\le l-1,\ c\in S\}                                           \\
      & =\{\tilde{\beta}^{\frac{M}{m}(c+km)}-1:\ 0\le k\le l-1,\ c\in S\}                      \\
      & =\{\tilde{\beta}^{\frac{M}{m}c+kM}-1:\ 0\le k\le l-1,\ c\in S\}                        \\
      & =\{\tilde{\beta}^{\tilde{c}+kM}-1:\ 0\le k\le l-1,\ \tilde{c}\in\tilde{S}\}=\tilde{B},
  \end{align*}
  it suffices to show that the natural embedding $\mathbb{F}_q^*\rightarrow\mathbb{F}_Q^*$ induces a group isomorphism $\mathbb{F}_q^*/H\stackrel{\cong}{\rightarrow}\mathbb{F}_Q^*/\tilde{H}$. Since both $\mathbb{F}_q^*/H$ and $\mathbb{F}_Q^*/\tilde{H}$ are a cyclic group of order $l$, and the image of $\theta$ in $\mathbb{F}_q^*/H$ is a generator of $\mathbb{F}_q^*/H$, it suffices to show that the order of $($the image of$)$ $\theta$ in $\mathbb{F}_Q^*/\tilde{H}$ is $l$. Since $\theta=\tilde{\theta}^{M/m}$ and the order of $\tilde{\theta}$ in $\mathbb{F}_Q^*/\tilde{H}$ is $l$, the order of $\theta$ in $\mathbb{F}_Q^*/\tilde{H}$ equals $l/\gcd(l,M/m)$. Hence it suffices to show that $\gcd(l,M/m)=1$. Indeed, since $l\mid (q-1)$, i.e., $q\equiv 1\ ({\rm{mod}}\ l)$, we have
  $$\gcd(l,\frac{M}{m})=\gcd(l,\frac{Q-1}{q-1})=\gcd(l,q^{t-1}+q^{t-2}+\cdots+q+1)=\gcd(l,t)=1.$$
  This completes the proof.
\end{proof}

\section{Strong Circular External Difference Families}

We start by recalling the definition of the strong circular external difference family.

\begin{definition}[{\cite[Definition 4.7]{veitch2023unconditionally}}]
  Let $(G,\cdot)$ be a finite abelian group of order $n$, let $l$, $m\ge 2$ and let $c\in [m-1]$. A family $\{A_0,\cdots,A_{m-1}\}$ of $m$ pairwise disjoint subsets of $G$ is called an $(n,m,l;\lambda)$-$c$-strong circular external difference family (SCEDF) in $G$ if
  \begin{enumerate}[label=(\arabic*)]
    \item $|A_i|=l$ for any $0\le i\le m-1$;
    \item for any $0\le i\le m-1$, each element $g\in G\backslash\{1_G\}$ occurs exactly $\lambda$ times in $\Delta(A_{i+c},A_i)$, or equivalently,
          $$A_{i+c}A_i^{(-1)}=\lambda(G-1_G),$$
          where the addition of the subscripts is performed in the sense of modulo $m$.
  \end{enumerate}
  Obviously, a necessary condition for the existence of an $(n,m,l;\lambda)$-$c$-CEDF is $l^2=\lambda(n-1)$.
\end{definition}

The concept of SCEDF was proposed in \cite{veitch2023unconditionally} by Veitch and Stinson in order to construct strong circular algebraic manipulation detection codes where the secret is not uniformly distributed. On page $14$ of \cite{veitch2023unconditionally}, they stated that in general, SCEDFs seem difficult to construct, so the first question they asked at the end of \cite{veitch2023unconditionally} was whether there exist any non-trivial examples of SCEDFs.

In this section, we will give a complete answer to this question by showing the non-existence of non-trivial SCEDFs. Before that, we should first clarify the meaning of triviality. Essentially, trivial SCEDFs are those which are constructed from $(n,2,l;\lambda$)-SEDFs. It is not difficult to see that an $(n,2,l;\lambda)$-$1$-SCEDF is the same thing as an $(n,2,l;\lambda)$-SEDF. The following two lemmas tell us how to construct larger SCEDFs from such SEDFs.

\begin{lemma}\label{20231013lemmaone}
  Let $(G,\cdot)$ be a finite abelian group of order $n$, let $c\in [m-1]$ and let $\{A_0,\cdots,A_{m-1}\}$ be a family of $m$ disjoint subsets of $G$ such that $|A_i|=l$ for any $0\le i\le m-1$. Put $t=\gcd(c,m)$, $c'=c/t$, $m'=m/t$, and
  $$\mathcal{D}^{(i)}=\{D_0^{(i)}=A_i,\ D_1^{(i)}=A_{i+t},\ \cdots,\ D_{m'-1}^{(i)}=A_{i+(m'-1)t}\}$$
  for any $0\le i\le t-1$. Then the family $\{A_0,\cdots,A_{m-1}\}$ is an $(n,m,l;\lambda)$-$c$-SCEDF in $G$ if and only if for any $0\le i\le t-1$, the family $\mathcal{D}^{(i)}$ is an $(n,m',l;\lambda)$-$c'$-SCEDF in $G$.
\end{lemma}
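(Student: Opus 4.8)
The plan is to exploit the fact that the $c$-SCEDF condition only relates $A_{i+c}$ to $A_i$, and that the map $i \mapsto i+c$ on $\mathbb{Z}_m$ partitions $\mathbb{Z}_m$ into $t = \gcd(c,m)$ orbits, each of size $m' = m/t$. So the family $\{A_0,\dots,A_{m-1}\}$ really decomposes into $t$ independent sub-families indexed by the orbits, and the SCEDF equations split accordingly. First I would record the key index-theoretic observation: for each fixed $0 \le i \le t-1$, the orbit of $i$ under addition by $c$ is exactly $\{i, i+t, i+2t, \dots, i+(m'-1)t\} \pmod m$ (this uses $\gcd(c,m) = t$, hence $\langle c \rangle = \langle t \rangle$ in $\mathbb{Z}_m$), and these $t$ orbits are pairwise disjoint and cover $\mathbb{Z}_m$. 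Consequently every index $j \in \{0,\dots,m-1\}$ lies in exactly one $\mathcal{D}^{(i)}$, and the subsets appearing in $\mathcal{D}^{(i)}$ are precisely $\{A_j : j \equiv i \pmod t\}$.

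Next I would translate the defining condition. The family $\{A_0,\dots,A_{m-1}\}$ is an $(n,m,l;\lambda)$-$c$-SCEDF in $G$ if and only if $A_{j+c}A_j^{(-1)} = \lambda(G - 1_G)$ in $\mathbb{Z}[G]$ for every $0 \le j \le m-1$. Since the $t$ orbits partition $\{0,\dots,m-1\}$, this system of $m$ equations is equivalent to the conjunction, over $0 \le i \le t-1$, of the $m'$ equations obtained by restricting $j$ to the orbit of $i$. Now within the orbit of $i$, writing $D_r^{(i)} = A_{i+rt}$ for $0 \le r \le m'-1$, the index $j = i+rt$ has $j + c = i + rt + c't = i + (r+c')t$, and since subscripts are read mod $m$, this is $D_{r+c'}^{(i)}$ with $r+c'$ read mod $m'$ (because $(r+c')t \bmod m$ corresponds to $(r+c') \bmod m'$). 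Hence the orbit-$i$ equations are exactly $D_{r+c'}^{(i)} (D_r^{(i)})^{(-1)} = \lambda(G-1_G)$ for all $0 \le r \le m'-1$, i.e. $\mathcal{D}^{(i)}$ is an $(n,m',l;\lambda)$-$c'$-SCEDF in $G$. I should also note $\gcd(c',m') = 1$ is automatic and that each $\mathcal{D}^{(i)}$ inherits pairwise disjointness and the size condition $|D_r^{(i)}| = l$ from the original family, so the definition is genuinely satisfied.

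The only real bookkeeping obstacle is making the modular reduction $((r+c')t \bmod m) \leftrightarrow ((r+c') \bmod m')$ airtight — i.e., verifying that the bijection $r \mapsto i + rt \pmod m$ from $\mathbb{Z}_{m'}$ onto the orbit of $i$ intertwines "add $c'$ mod $m'$" with "add $c$ mod $m$". This follows because $c = c't$ and multiplication by $t$ is an isomorphism $\mathbb{Z}_{m'} \xrightarrow{\sim} t\mathbb{Z}_m$, but it is the step to write out carefully; everything else is a direct unwinding of definitions and the orbit decomposition. I would present the argument as a chain of equivalences: the $c$-SCEDF condition $\iff$ all $m$ group-ring equations hold $\iff$ for each $i$, all $m'$ orbit-$i$ equations hold $\iff$ for each $i$, $\mathcal{D}^{(i)}$ is a $c'$-SCEDF.
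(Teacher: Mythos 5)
Your proposal is correct and follows essentially the same route as the paper: decompose $\{0,\dots,m-1\}$ into the $t$ residue classes modulo $t$ (equivalently, the orbits of adding $c$), check that $D^{(i)}_{j+m'}=D^{(i)}_j$ so that the mod-$m'$ indexing is consistent, and observe that $\Delta(D^{(i)}_{j+c'},D^{(i)}_j)=\Delta(A_{(i+jt)+c},A_{i+jt})$, so the $m$ defining equations split into $t$ blocks of $m'$ equations each. The modular-reduction step you flag as needing care is exactly the one-line verification $A_{i+(j+m')t}=A_{i+jt+m}=A_{i+jt}$ that the paper records.
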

\begin{proof}
  Since any integer $r\in\{0,\cdots,m-1\}$ can be uniquely written as $r=i+jt$ with $0\le i\le t-1$ and $0\le j\le m'-1$, the family $\{A_0,\cdots,A_{m-1}\}$ is the disjoint union of the families $\mathcal{D}^{(0)},\cdots,\mathcal{D}^{(t-1)}$. Moreover, for any $0\le i\le t-1$ and any $0\le j\le m'-1$, we have $D_{j+m'}^{(i)}=A_{i+(j+m')t}=A_{i+jt+m}=A_{i+jt}=D_j^{(i)}$ and
  $$\Delta(D_{j+c'}^{(i)},D_j^{(i)})=\Delta(A_{i+(j+c')t},A_{i+jt})=\Delta(A_{(i+jt)+c},A_{i+jt}).$$
  This lemma follows immediately.
\end{proof}

\begin{lemma}\label{20231013lemmatwo}
  Let $(G,\cdot)$ be a finite abelian group of order $n$, let $c\in [m-1]$ and let $\{A_0,\cdots,A_{m-1}\}$ be a family of $m$ disjoint subsets of $G$ such that $|A_i|=l$ for any $0\le i\le m-1$. For any $0\le i\le m-1$, put $D_i=A_{ic}$. Then the family $\{A_0,\cdots,A_{m-1}\}$ is an $(n,m,l;\lambda)$-$c$-SCEDF in $G$ if and only if the family $\{D_0,D_1,\cdots,D_{m-1}\}$ is an $(n,m,l;\lambda)$-$1$-SCEDF in $G$.
\end{lemma}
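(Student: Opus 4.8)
My plan is to treat this as a relabelling statement: the family $\{D_0,\cdots,D_{m-1}\}$ is obtained from $\{A_0,\cdots,A_{m-1}\}$ by permuting the indices via multiplication by $c$, and under that permutation the defining equations of a $1$-SCEDF turn into exactly those of a $c$-SCEDF. The only structural input needed is that $c$ be invertible modulo $m$, which is precisely the setting in which this lemma gets applied: Lemma \ref{20231013lemmaone} first reduces the pair $(m,c)$ to a pair $(m',c')$ with $\gcd(m',c')=1$. So I would take $\gcd(c,m)=1$ throughout.

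First I would set up the relabelling. Since $\gcd(c,m)=1$, the map $\pi\colon\mathbb{Z}_m\to\mathbb{Z}_m$, $\pi(i)=ci$ (with indices read modulo $m$), is a bijection, and by construction $D_i=A_{\pi(i)}$ for every $i$. Hence $\{D_0,\cdots,D_{m-1}\}$ is the same collection of subsets of $G$ as $\{A_0,\cdots,A_{m-1}\}$, merely relabelled; in particular it consists of $m$ pairwise disjoint subsets each of size $l$ if and only if $\{A_0,\cdots,A_{m-1}\}$ does. So the cardinality and disjointness parts of the two definitions match automatically, and only the external-difference condition remains to be compared.

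Then I would compare those conditions in $\mathbb{Z}[G]$. By definition $\{D_0,\cdots,D_{m-1}\}$ is an $(n,m,l;\lambda)$-$1$-SCEDF in $G$ exactly when $D_{i+1}D_i^{(-1)}=\lambda(G-1_G)$ for all $i\in\mathbb{Z}_m$. Substituting $D_i=A_{ci}$ and $D_{i+1}=A_{c(i+1)}=A_{ci+c}$ turns this into the system $A_{ci+c}A_{ci}^{(-1)}=\lambda(G-1_G)$, $i\in\mathbb{Z}_m$; and since $j:=ci$ again runs over all of $\mathbb{Z}_m$ as $i$ does, this is literally the system $A_{j+c}A_j^{(-1)}=\lambda(G-1_G)$, $j\in\mathbb{Z}_m$, which is the definition of $\{A_0,\cdots,A_{m-1}\}$ being an $(n,m,l;\lambda)$-$c$-SCEDF in $G$. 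This gives the equivalence. I do not expect a genuine obstacle here: the whole content is that multiplication by the unit $c$ permutes $\mathbb{Z}_m$ and sends pairs of indices at distance $c$ to pairs at distance $1$, and the single point that must be made explicit is this bijectivity — it is what simultaneously transfers the disjointness and guarantees that none of the $m$ defining equations is lost or repeated in passing between the two families.
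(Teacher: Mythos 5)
Your proof is correct and follows essentially the same route as the paper: the paper's own proof likewise begins by invoking $\gcd(c,m)=1$ (a hypothesis implicit in the statement but needed, as you correctly observe, and guaranteed in the one place the lemma is used), notes that $\{D_0,\cdots,D_{m-1}\}$ is a permutation of $\{A_0,\cdots,A_{m-1}\}$, and reduces the difference conditions via $\Delta(D_{i+1},D_i)=\Delta(A_{ic+c},A_{ic})$. Your write-up merely makes explicit the final step (that $j=ic$ runs over all of $\mathbb{Z}_m$) which the paper leaves as ``follows immediately.''
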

\begin{proof}
  Since $\gcd(c,m)=1$, the family $\{D_0,\cdots,D_{m-1}\}$ is a permutation of the family $\{A_0,\cdots,A_{m-1}\}$. Moreover, for any $0\le i\le m-1$, we have $D_{i+m}=A_{(i+m)c}=A_{ic+mc}=A_{ic}=D_i$ and
  $$\Delta(D_{i+1},D_i)=\Delta(A_{(i+1)c},A_{ic})=\Delta(A_{ic+c},A_{ic}).$$
  This lemma follows immediately.
\end{proof}

Now assume that $\mathcal{D}^{(i)}=\{D_j^{(0)}, D_j^{(1)}\}$ are $c$ disjoint $(n,2,l;\lambda)$-SEDFs in a finite abelian group $(G,\cdot)$. For any integer $r\in\{0,\cdots,2c-1\}$, put $A_r=D^{(i)}_j$, where $i$, $j$ are the unique integers with $0\le i\le c-1$ and $0\le j\le 1$ such that $r=i+jc$. By Lemma \ref{20231013lemmaone}, the family $\{A_0,\cdots,A_{2c-1}\}$ is an $(n,2c,l;\lambda)$-$c$-SCEDF in $G$. An SCEDF constructed in this way will be called a trivial SCEDF. Reversing the process, one can easily see that any $(n,m,l;\lambda)$-$c$-SCEDF with $m=2c$ is trivial.

\begin{theorem}\label{20231013themone}
  Let $\{A_0,\cdots,A_{m-1}\}$ be an $(n,m,l;\lambda)$-$1$-SCEDF in a finite abelian group $(G,\cdot)$ of order $n$. Then $m=2$.
\end{theorem}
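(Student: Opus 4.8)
The plan is to push the $m$ defining identities of a $1$-SCEDF through every character of $G$ and exploit the rigidity this forces. Let $\{A_0,\dots,A_{m-1}\}$ be an $(n,m,l;\lambda)$-$1$-SCEDF in the finite abelian group $G$ of order $n$. Since the $A_i$ are nonempty we have $l\ge 1$, and the necessary condition $l^2=\lambda(n-1)$ then gives $\lambda\ge 1$ and $n\ge 2$, so $\widehat{G}$ contains a nontrivial character. Fix any nontrivial $\chi\in\widehat{G}$ and write $a_i=\chi(A_i)\in\mathbb{C}$. Applying $\chi$ to the relations $A_{i+1}A_i^{(-1)}=\lambda(G-1_G)$ for $0\le i\le m-1$ (subscripts mod $m$), and using $\chi(G)=0$ together with $\chi(A_i^{(-1)})=\overline{\chi(A_i)}$, I obtain
$$a_{i+1}\,\overline{a_i}=-\lambda\qquad\text{for every }i\in\mathbb{Z}_m,$$
and in particular $a_i\ne 0$ for all $i$.

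Next I would extract the key periodicity $a_{i+2}=a_i$: from $a_{i+1}\overline{a_i}=-\lambda$ we get $\overline{a_{i+1}}=-\lambda/a_i$, and feeding this into $a_{i+2}\overline{a_{i+1}}=-\lambda$ yields $a_{i+2}=a_i$ for all $i\in\mathbb{Z}_m$. If $m$ is odd, then $2$ generates $\mathbb{Z}_m$, so iterating forces all the $a_i$ equal to a single value $a$, and then $|a|^2=a\overline{a}=a_1\overline{a_0}=-\lambda<0$, which is absurd; hence $m$ must be even.

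Finally, suppose $m\ge 4$ is even. Then the indices $0$ and $2$ are distinct, and $a_{i+2}=a_i$ shows $\chi(A_0)=\chi(A_2)$ for every nontrivial $\chi$, while $1_{\widehat{G}}(A_0)=|A_0|=l=|A_2|=1_{\widehat{G}}(A_2)$ handles the trivial character. Since the characters of $G$ separate the elements of $\mathbb{Z}[G]$, this gives $A_0=A_2$ as subsets of $G$, contradicting the disjointness (and nonemptiness) of $A_0$ and $A_2$. Hence $m<4$, and together with $m\ge 2$ and $m$ even this forces $m=2$. There is no genuinely hard step here; the only points to watch are the existence of a nontrivial character (equivalently, that the $A_i$ are not all empty) and reading the periodicity $a_{i+2}=a_i$ correctly modulo $m$ — note it is vacuous when $m=2$, which is precisely why that value survives.
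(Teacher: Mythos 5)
Your proof is correct and follows essentially the same route as the paper: apply each nontrivial character $\chi$ to the relations $A_{i+1}A_i^{(-1)}=\lambda(G-1_G)$ to get $\chi(A_{i+1})\overline{\chi(A_i)}=-\lambda$, deduce the two-periodicity $\chi(A_{i+2})=\chi(A_i)$, and combine with the trivial character to conclude $A_0=A_2$, contradicting disjointness when $m\ge 3$. Your separate treatment of odd $m$ via $|a|^2=-\lambda$ is harmless but unnecessary, since $A_0=A_2$ already contradicts disjointness for every $m\ge 3$, odd or even.
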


\begin{proof}
  By definition, the family $\{A_0,\cdots,A_{m-1}\}$ is an $(n,m,l;\lambda)$-$1$-SCEDF in $G$ if and only if
  \begin{equation}\label{20231013equationone}
    A_{i+1}A_i^{(-1)}=\lambda(G-1_G)\quad\mbox{in}\ \mathbb{Z}[G]
  \end{equation}
  for any $0\le i\le m-1$. It follows that for any non-trivial character $\chi\in\widehat{G}$ and any $0\le i\le m-1$, we have
  $$\chi(A_{i+1})\overline{\chi(A_i)}=\chi(A_{i+1})\chi(A_i^{(-1)})=\lambda\cdot\chi(G)-\lambda\cdot\chi(1_G)=-\lambda\quad\mbox{in }\mathbb{C}.$$
  In particular, we have
  $$\chi(A_{1})\overline{\chi(A_0)}=-\lambda=\chi(A_2)\overline{\chi(A_1)}.$$
  Since $\lambda\ne 0$, we have $\chi(A_1)\ne 0$. Since $\lambda\in\mathbb{R}$, we have
  $$\chi(A_0)\overline{\chi(A_1)}=\overline{\chi(A_1)\overline{\chi(A_0)}}=\overline{-\lambda}=-\lambda=\chi(A_2)\overline{\chi(A_1)}.$$
  Since $\chi(A_1)\ne 0$, we have $\chi(A_0)=\chi(A_2)$. Moreover, we have
  $$1_{\widehat{G}}(A_0)=|A_0|=l=|A_2|=1_{\widehat{G}}(A_2).$$
  Hence $\chi(A_0)=\chi(A_2)$ for any $\chi\in\widehat{G}$, which implies that $A_0=A_2$.
\end{proof}

Combining the results above, we can derive the non-existence of non-trivial SCEDFs.

\begin{corollary}
  There are no non-trivial SCEDFs.
\end{corollary}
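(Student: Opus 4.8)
The plan is to chain the two reduction lemmas together with Theorem~\ref{20231013themone}, so that an arbitrary SCEDF is forced down to a handful of $(n,2,l;\lambda)$-SEDFs and is thereby seen to be trivial. Concretely, suppose $\{A_0,\dots,A_{m-1}\}$ is an $(n,m,l;\lambda)$-$c$-SCEDF in a finite abelian group $(G,\cdot)$. Set $t=\gcd(c,m)$, $c'=c/t$, $m'=m/t$, so that $\gcd(c',m')=1$ and (since $c\in[m-1]$ forces $t\le c<m$) we have $m'\ge 2$. By Lemma~\ref{20231013lemmaone}, each of the $t$ subfamilies $\mathcal{D}^{(i)}$ $(0\le i\le t-1)$ is an $(n,m',l;\lambda)$-$c'$-SCEDF; moreover these $t$ subfamilies are pairwise disjoint and together exhaust $\{A_0,\dots,A_{m-1}\}$.

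Next, since $\gcd(c',m')=1$, Lemma~\ref{20231013lemmatwo} applies to each $\mathcal{D}^{(i)}$: reindexing its blocks via $j\mapsto jc'$ turns it into an $(n,m',l;\lambda)$-$1$-SCEDF. Theorem~\ref{20231013themone} then forces $m'=2$, and because $c'\in[m'-1]=\{1\}$ we get $c'=1$, hence $t=c$ and $m=2c$. Consequently every $\mathcal{D}^{(i)}$ is an $(n,2,l;\lambda)$-$1$-SCEDF, which (as observed in the text) is the same thing as an $(n,2,l;\lambda)$-SEDF; thus we have produced $c$ pairwise disjoint such SEDFs sitting inside $\{A_0,\dots,A_{m-1}\}$.

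Finally I would verify that the original family is recovered from these $c$ SEDFs by exactly the construction defining a \emph{trivial} SCEDF. With $m=2c$ we have $t=c$, so Lemma~\ref{20231013lemmaone} gives $\mathcal{D}^{(i)}=\{D^{(i)}_0=A_i,\,D^{(i)}_1=A_{i+c}\}$, and writing $r=i+jc$ with $0\le i\le c-1$, $0\le j\le 1$ yields $A_r=D^{(i)}_j$, which is precisely the trivial construction described above. Hence the given SCEDF is trivial, and, together with the preceding corollary-free reasoning, this proves that no non-trivial SCEDFs exist.

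The argument is essentially bookkeeping, and I do not expect a genuinely hard step. The only points needing care are: (i) checking that the successive reductions land exactly on the parameters $m'=2$, $c'=1$, so that $m=2c$; and (ii) checking that the reversal of Lemma~\ref{20231013lemmaone} matches, verbatim, the stated definition of triviality — in particular that the $c$ SEDFs obtained are pairwise disjoint, which is inherited directly from the pairwise disjointness of the blocks $A_i$.
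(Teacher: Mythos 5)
Your argument is correct and follows essentially the same route as the paper: reduce via Lemma~\ref{20231013lemmaone} and Lemma~\ref{20231013lemmatwo} to an $(n,m',l;\lambda)$-$1$-SCEDF and invoke Theorem~\ref{20231013themone} to force $m'=2$, hence $c'=1$, $m=2c$, and triviality. The only cosmetic difference is that the paper argues by contradiction from the assumption $m\ne 2c$, whereas you derive $m=2c$ directly and explicitly verify that the resulting decomposition into $c$ disjoint $(n,2,l;\lambda)$-SEDFs matches the stated definition of a trivial SCEDF.
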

\begin{proof}
  Assume that the family $\mathcal{A}=\{A_0,\cdots,A_{m-1}\}$ is an $(n,m,l;\lambda)$-$c$-SCEDF with $m\ne 2c$ in a finite abelian group $G$. By Lemma \ref{20231013lemmaone}, there exists an $(n,m',l;\lambda)$-$c'$-SCEDF in $G$ with $m'=m/t$, $c'=c/t$, where $t=\gcd(c,m)$. By Lemma \ref{20231013lemmatwo}, we may assume that $c'=1$. Since $c\in[m-1]$ and $m\ne 2c$, we must have $t=\gcd(c,m)<\frac{m}{2}$ and thus $m'=m/t>2$. By Theorem \ref{20231013themone}, this is impossible. Hence all SCEDFs are trivial.
\end{proof}

Therefore, the study of SCEDFs reduces to the study of $(n,2,l;\lambda)$-SEDFs. A comprehensive summary of all the constructed SCEDFs of this type can be found in \cite[Proposition 1.1]{jedwab2019construction}. On the other hand, we have the following result on the non-existence of such  SCEDFs.

\begin{theorem}[{\cite[Theorem 4.2]{jedwab2019construction}}]\label{20231013themthree}
  Let $(G,\cdot)$ be a finite abelian group of order $n$ and let $\lambda$, $l$ be positive integers such that $l\ge 2$ and $l^2=\lambda(n-1)$. Assume that
  \begin{enumerate}[label=(\arabic*)]
    \item $p$ is a prime factor of $n$ and $q$ is a prime factor of $\lambda$ such that $d=\max\{i\in\mathbb{N}_+:\ p^i\mid n\}$ and $f=\max\{j\in\mathbb{N}_+:\ q^j\mid\lambda\}$;
    \item $q$ is a primitive root modulo $p^d$, i.e., $q$ is a generator of $\mathbb{Z}_{p^d}^*$;
    \item $|G_p|>n/q^{\lceil f/2\rceil}$, where $G_p$ is the Sylow $p$-subgroup of $G$, i.e., the maximal cyclic subgroup of $G$ whose order is a power of $p$.
  \end{enumerate}
  Then there does not exist any $(n,2,l;\lambda)$-SEDF in $G$.
\end{theorem}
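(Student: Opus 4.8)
The plan is to rule out existence by the standard circle of ideas for difference-set-type non-existence: a character translation, divisibility of cyclotomic integers coming from the hypothesis on $q$, and a contraction/self-conjugacy argument in the spirit of Turyn and Ma.

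First I would set things up. For $m=2$ the two SEDF conditions coincide, so the hypothesis is exactly $A_0A_1^{(-1)}=\lambda(G-1_G)$ in $\mathbb{Z}[G]$; applying a non-trivial character $\chi$ of $G$ gives $\chi(A_0)\overline{\chi(A_1)}=-\lambda$, whence $\chi(A_0),\chi(A_1)\neq 0$ and $|\chi(A_0)|^2|\chi(A_1)|^2=\lambda^2$. Also, $l^2=\lambda(n-1)$ with $q^f\mid\lambda$ forces $2v_q(l)=v_q(\lambda)+v_q(n-1)\ge f$, i.e. $v_q(l)\ge\lceil f/2\rceil$. On the number-theoretic side: since $q\mid\lambda$ and $q$ is a unit modulo $p^d$ we have $q\neq p$, so $q$ is unramified in every $\mathbb{Q}(\zeta_{p^j})$; and since $q$ is a primitive root modulo $p^d$ it is a primitive root modulo $p^j$ for each $1\le j\le d$, so $\Phi_{p^j}$ stays irreducible modulo $q$ and $(q)$ is a prime ideal $\mathfrak{q}_j$ of $\mathbb{Z}[\zeta_{p^j}]$ of residue degree $p^{j-1}(p-1)$. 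Being the only prime above $q$, $\mathfrak{q}_j$ is fixed by complex conjugation, so $v_{\mathfrak{q}_j}(\overline{x})=v_{\mathfrak{q}_j}(x)$. Hence for any character $\chi$ of $G$ of order $p^j$ with $1\le j\le e$ (where $p^e=|G_p|$, so $p^e\mid p^d$) we have $\chi(A_i)\in\mathbb{Z}[\zeta_{p^j}]$, and $\chi(A_0)\overline{\chi(A_1)}=-\lambda$ forces $v_{\mathfrak{q}_j}(\chi(A_0))+v_{\mathfrak{q}_j}(\chi(A_1))=v_q(\lambda)=f$; in particular $q^{\lceil f/2\rceil}$ divides at least one of $\chi(A_0),\chi(A_1)$ in $\mathbb{Z}[\zeta_{p^j}]$.

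Then I would contract. Pick a surjection $\rho\colon G\twoheadrightarrow\bar G$ with $\bar G$ cyclic of order $p^e=|G_p|$, extended $\mathbb{Z}$-linearly. Applying $\rho$ to the defining equation and reducing modulo $q^f$ (valid since $q^f\mid\lambda$) gives $\rho(A_0)\rho(A_1)^{(-1)}\equiv 0$ in $\mathbb{Z}_{q^f}[\bar G]$. Inertness of $q$ modulo each $p^j$ yields, via the Chinese Remainder Theorem, $\mathbb{Z}_{q^f}[\bar G]\cong\mathbb{Z}_{q^f}\times\prod_{j=1}^{e}\mathrm{GR}(q^f,\,p^{j-1}(p-1))$, a product of Galois rings, each a finite chain ring with maximal ideal $(q)$ in which $q^f=0$. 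So in every factor the $q$-adic valuations of the images of $\rho(A_0)$ and of $\rho(A_1)$ sum to at least $f$, hence one of them is $\ge\lceil f/2\rceil$; and in the $\mathbb{Z}_{q^f}$ factor both images equal $l$, of valuation $\ge\lceil f/2\rceil$. Suppose one can show it is always the \emph{same} subset, say $\rho(A_0)$, whose image has valuation $\ge\lceil f/2\rceil$ in every factor: then $\rho(A_0)\equiv 0\pmod{q^{\lceil f/2\rceil}}$ in $\mathbb{Z}[\bar G]$, while every coefficient of $\rho(A_0)$ is a non-negative integer at most $[G:G_p]=n/|G_p|<q^{\lceil f/2\rceil}$ (precisely the hypothesis $|G_p|>n/q^{\lceil f/2\rceil}$), forcing all coefficients to be $0$, so $A_0=\emptyset$, contradicting $|A_0|=l\ge 2$. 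When $d=1$ there are only the $\mathbb{Z}_{q^f}$ and the $\mathrm{GR}(q^f,p-1)$ factors, so this finishes the proof at once.

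The main obstacle is exactly the coherence step for $d\ge 2$: ruling out that $q^{\lceil f/2\rceil}$ divides $\rho(A_0)$ in some Galois-ring factors and $\rho(A_1)$ in others. One cannot merely symmetrize, since $|\chi(A_0)|=|\chi(A_1)|$ genuinely fails in general (small SEDFs exhibit this). To attack it I would work through the tower $G\twoheadrightarrow\mathbb{Z}_{p^e}\twoheadrightarrow\cdots\twoheadrightarrow\mathbb{Z}_p$, using (i) the rigidity that $\rho(A_0)\rho(A_1)^{(-1)}$ equals the flat element $\lambda(|\ker\rho|\,\bar G-1_{\bar G})$, (ii) the fact that $\chi(A_1)$ is a negative real multiple of $\chi(A_0)$ for every $\chi$, so the two vanish simultaneously over $\mathbb{C}$, and (iii) a norm count based on $\prod_{\chi\neq 1}|\chi(A_0)|^2\cdot\prod_{\chi\neq 1}|\chi(A_1)|^2=\lambda^{2(n-1)}$ to balance the $\mathfrak{q}_j$-valuations across the factors; if a direct argument proves elusive, one would instead invoke a field-descent bound. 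I expect essentially all of the work to be concentrated here.
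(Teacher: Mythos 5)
First, a point of reference: the paper offers no proof of this statement --- it is imported verbatim from \cite[Theorem 4.2]{jedwab2019construction} --- so there is no in-paper argument to compare yours against, and I am judging your proposal against the standard proof from that source. Your setup is the right circle of ideas and is correct as far as it goes: the identity $\chi(A_0)\overline{\chi(A_1)}=-\lambda$, the inertness and self-conjugacy of $q$ in $\mathbb{Z}[\zeta_{p^j}]$, the valuation splitting $v(\chi(A_0))+v(\chi(A_1))=f$, the bound $v_q(l)\ge\lceil f/2\rceil$, and the coefficient bound $n/|G_p|<q^{\lceil f/2\rceil}$ are exactly the ingredients that are used. Your $d=1$ case is complete. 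But for the theorem as stated the proposal has a genuine gap, which you yourself flag: you never prove that the index $i$ for which $q^{\lceil f/2\rceil}\mid\rho(A_i)$ is the same in every Galois-ring factor, and none of your three suggested remedies closes this. Item (ii) is true ($\chi(A_1)$ is indeed a negative real multiple of $\chi(A_0)$) but it is an archimedean statement carrying no $q$-adic information; item (iii) is literally the product over $\chi\ne 1$ of the individual identities $|\chi(A_0)|^2|\chi(A_1)|^2=\lambda^2$ and so contains nothing new; item (i) is not yet an argument. You also missed a free partial fix: since $(q)$ is the unique prime of $\mathbb{Z}[\zeta_{p^j}]$ above $q$, it is fixed by the whole Galois group, so $v(\chi^\sigma(A_i))=v(\chi(A_i))$ and the ``winner'' is constant on each Galois orbit of characters; but this settles only the situation where a single orbit of nontrivial $p$-power-order characters occurs.

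The gap is real but avoidable: the standard proof sidesteps it by using a \emph{single} character. Take $\chi$ whose restriction to $G_p$ is faithful (this is where the parenthetical ``maximal cyclic'' in condition (3) matters), so that $\chi$ has order $p^d$ and $|\ker\chi|\le n/|G_p|$. Your valuation argument gives, without loss of generality, $q^{\lceil f/2\rceil}\mid\chi(A_0)$ in $\mathbb{Z}[\zeta_{p^d}]$. Write $\chi(A_0)=\sum_{j=0}^{p^d-1}c_j\zeta_{p^d}^j$ with $0\le c_j\le|\ker\chi|<q^{\lceil f/2\rceil}$. Reducing to the $\mathbb{Z}$-basis $\{\zeta_{p^d}^j:\ 0\le j<\varphi(p^d)\}$ via the relations $\sum_{i=0}^{p-1}\zeta_{p^d}^{r+ip^{d-1}}=0$ shows that $q^{\lceil f/2\rceil}\mid\chi(A_0)$ forces $c_j\equiv c_{j'}\ (\mathrm{mod}\ q^{\lceil f/2\rceil})$, hence $c_j=c_{j'}$, for the indices paired by these relations, whence $\chi(A_0)=0$, contradicting $\chi(A_0)\overline{\chi(A_1)}=-\lambda\ne 0$. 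No comparison between different characters is ever needed. As written, your argument establishes the theorem only when a single Galois orbit of nontrivial $p$-power characters is involved (e.g.\ $d=1$, which happens to cover every application the present paper makes of this theorem), not in the stated generality; I would replace the contraction-plus-CRT endgame with the one-character argument above.
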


If $n=p$ in Theorem \ref{20231013themthree}, then $G=G_p$ and thus $|G_p|=p>n/q\ge n/q^{\lceil f/2\rceil}$, i.e., the condition (3) in Theorem \ref{20231013themthree} is automatically satisfied. Hence we have the following corollary.

\begin{corollary}\label{20231013corolone}
  Let $p$ be a prime number and let $\lambda$, $l$ be positive integers such that $2\le l\le p/2$ and $\lambda(p-1)=l^2$. If $\lambda$ has a prime factor $q$ which is a primitive root modulo $p$. Then there does not exist any $(p,2,l;\lambda)$-SEDF.
\end{corollary}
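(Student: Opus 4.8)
The plan is to derive this as an immediate special case of Theorem~\ref{20231013themthree} applied with $n = p$. First I would record the two observations that make the specialization go through. Since $p$ is prime, the Sylow $p$-subgroup $G_p$ of an abelian group $G$ of order $p$ is all of $G$, so $|G_p| = p = n$, and the exponent $d = \max\{i \in \mathbb{N}_+ : p^i \mid n\}$ equals $1$. Moreover, the hypothesis that $\lambda$ has a prime factor $q$ forces $\lambda \ge 2$, hence $f = \max\{j \in \mathbb{N}_+ : q^j \mid \lambda\} \ge 1$, so that $\lceil f/2\rceil \ge 1$ and in particular $q^{\lceil f/2\rceil} \ge q \ge 2 > 1$.

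With these in hand I would check the three conditions of Theorem~\ref{20231013themthree} in turn. Condition (1) holds with $p$, $q$, $d = 1$, and $f$ as above. Condition (2) is precisely the standing assumption that $q$ is a primitive root modulo $p = p^d$. Condition (3) reads $|G_p| > n/q^{\lceil f/2\rceil}$, which here is $p > p/q^{\lceil f/2\rceil}$, and this is true because $q^{\lceil f/2\rceil} > 1$. The remaining numerical hypotheses of Theorem~\ref{20231013themthree}, namely $l \ge 2$ and $l^2 = \lambda(n-1)$, are exactly what is assumed here with $n = p$; the extra constraint $l \le p/2$ in the corollary plays no role in the non-existence argument itself (it only ensures that two disjoint $l$-subsets fit inside a group of order $p$, so that the question is not vacuous). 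Theorem~\ref{20231013themthree} then gives that no $(p,2,l;\lambda)$-SEDF exists in $G$, and since the cyclic group of order $p$ is the only group of that order, this is the assertion.

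I do not expect any genuine obstacle: the corollary is a clean specialization, and the only step deserving a moment's care is the verification that condition (3) of Theorem~\ref{20231013themthree} is automatic, which, as noted, reduces to the trivial inequality $q^{\lceil f/2\rceil} > 1$ combined with the identification $G = G_p$ forced by the primality of $n$.
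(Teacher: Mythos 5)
Your proposal is correct and follows essentially the same route as the paper: the paper likewise observes that for $n=p$ one has $G=G_p$, so $|G_p|=p>n/q\ge n/q^{\lceil f/2\rceil}$ and condition (3) of Theorem~\ref{20231013themthree} holds automatically, with the other hypotheses matching directly. No issues to report.
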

\begin{remark}
  Let $p$, $q$ be two prime numbers such that $p\ge 3$ and $q$ is a primitive root modulo $p$. We want to determine all integers $\lambda$, $l$ such that $2\le l\le(p-1)/2$, $q\mid\lambda$ and $\lambda(p-1)=l^2$. First, we decompose $p-1$ into the following form:
  $$p-1=q^cp_1^{2a_1}\cdots p_s^{2a_s}q_1^{2b_1-1}\cdots q_r^{2b_r-1},$$
  where $q$, $p_1$, $\cdots$, $p_s$, $q_1$, $\cdots$, $q_r$ are pairwise distinct prime numbers, $c\in\mathbb{N}$ and $a_1$, $\cdots$, $a_s$, $b_1$, $\cdots$, $b_r\in\mathbb{N}_+$. Since $p\ge 3$, $c+s+r\ge 1$. Put
  $$l_{\min}=q^{\lceil\frac{c+1}{2}\rceil}p_1^{a_1}\cdots p_s^{a_s}q_1^{b_1}\cdots q_r^{b_r}\ (\ge 2).$$
  If $\lambda(p-1)=l^2$ and $q\mid\lambda$, then $q(p-1)\mid l^2$, which implies that
  $l_{min}\mid l$. If $l_{\min}>\frac{p-1}{2}$, then there are no integers $\lambda$, $l$ satisfying the required conditions; otherwise, take $\lambda_{\min}=l_{\min}^2/(p-1)$ and then the integers $\lambda=\lambda_{\min}$, $l=l_{\min}$ satisfy the required conditions. More generally, for any integer $1\le t\le d:=\lfloor(p-1)/2l_{\min}\rfloor$, the integers $\lambda=t^2\lambda_{\min}$, $l=tl_{\min}$ satisfy the required conditions. By Corollary \ref{20231013corolone}, there do not exist any $(p,2,l;\lambda)$-SEDFs for such $\lambda$ and $l$. In Example \ref{20231014exampleone} below we give some specific instances.
\end{remark}

\begin{example}\label{20231014exampleone}
  If $p=4357$, then $q=2$ is a primitive root modulo $p$. Since $p-1=2^2\cdot 3^2\cdot 11^2$, we have $l_{\min}=2^2\cdot 3\cdot 11=132$, $\lambda_{\min}=l_{\min}^2/(p-1)=4$ and $d=\lfloor(p-1)/2l_{\min}\rfloor=16$. Hence for any integer $1\le t\le 16$, there does not exist any $(4357,2,132t;4t^2)$-SEDF. In Table \ref{20231012table4}, we list all cases where $p<1000$.
  \begin{table}[htbp]
    \centering
    \caption{Non-existent $(p,2,l;\lambda)$-SEDFs}\label{20231012table4}
    \begin{tabular}{cccccc}
      \toprule
      $p$   & $q$ & $p-1$                  & $l_{\min}$                & $\lambda_{\min}$    & $d=\lfloor\frac{p-1}{2l_{\min}}\rfloor$ \\
      \midrule
      $19$  & $2$ & $2\cdot 3^2$           & $2\cdot 3$                & $2$                 & $1$                                     \\
      $37$  & $2$ & $2^2\cdot 3^2$         & $2^2\cdot 3$              & $2^1$               & $1$                                     \\
      $101$ & $2$ & $2^2\cdot 5^2$         & $2^2\cdot 5$              & $2^2$               & $2$                                     \\
      $101$ & $3$ & $2^2\cdot 5^2$         & $2\cdot 5\cdot 3$         & $3^2$               & $1$                                     \\
      $163$ & $2$ & $2\cdot 3^4$           & $2\cdot 3^2$              & $2$                 & $4$                                     \\
      $163$ & $3$ & $2\cdot 3^4$           & $2\cdot 3^3$              & $2\cdot 3^2$        & $1$                                     \\
      $181$ & $2$ & $2^2\cdot 3^2\cdot 5$  & $2^2\cdot 3\cdot 5$       & $2^2\cdot 5$        & $1$                                     \\
      $197$ & $2$ & $2^2\cdot 7^2$         & $2^2\cdot 7$              & $2^2$               & $3$                                     \\
      $197$ & $3$ & $2^2\cdot 7^2$         & $2\cdot 7\cdot 3$         & $3^2$               & $2$                                     \\
      $197$ & $5$ & $2^2\cdot 7^2$         & $2\cdot 7\cdot 5$         & $5^2$               & $1$                                     \\
      $257$ & $3$ & $2^8$                  & $2^4\cdot 3$              & $3^2$               & $2$                                     \\
      $257$ & $5$ & $2^8$                  & $2^4\cdot 5$              & $5^2$               & $1$                                     \\
      $257$ & $7$ & $2^8$                  & $2^4\cdot 7$              & $7^2$               & $1$                                     \\
      $401$ & $3$ & $2^4\cdot 5^2$         & $2^2\cdot 3\cdot 5$       & $3^2$               & $3$                                     \\
      $433$ & $5$ & $2^4\cdot 3^3$         & $2^2\cdot 3^2\cdot 5$     & $3\cdot 5^2$        & $1$                                     \\
      $449$ & $3$ & $2^6\cdot 7$           & $2^3\cdot 3\cdot 7$       & $3^2\cdot 7$        & $1$                                     \\
      $487$ & $3$ & $2\cdot 3^5$           & $2\cdot 3^3$              & $2\cdot 3$          & $4$                                     \\
      $491$ & $2$ & $2\cdot 5\cdot 7^2$    & $2\cdot 5\cdot 7$         & $2\cdot 5$          & $3$                                     \\
      $541$ & $2$ & $2^2\cdot 3^3\cdot 5$  & $2^2\cdot 3^2\cdot 5$     & $2^2\cdot 3\cdot 5$ & $1$                                     \\
      $577$ & $5$ & $2^6\cdot 3^2$         & $2^3\cdot 3\cdot 5$       & $5^2$               & $2$                                     \\
      $577$ & $7$ & $2^6\cdot 3^2$         & $2^3\cdot 3\cdot 7$       & $7^2$               & $1$                                     \\
      $641$ & $3$ & $2^7\cdot 5$           & $2^4\cdot 3\cdot 5$       & $2\cdot 5\cdot 3^2$ & $1$                                     \\
      $677$ & $2$ & $2^2\cdot 13^2$        & $2^2\cdot 13$             & $2^2$               & $6$                                     \\
      $701$ & $2$ & $2^2\cdot 5^2\cdot 7$  & $2^2\cdot 5\cdot 7$       & $2^2\cdot 7$        & $2$                                     \\
      $727$ & $5$ & $2\cdot 3\cdot 11^2$   & $2\cdot 3\cdot 5\cdot 11$ & $2\cdot 3\cdot 5^2$ & $1$                                     \\
      $757$ & $2$ & $2^2\cdot 3^2\cdot 7$  & $2^2\cdot 3^2\cdot 7$     & $2^2\cdot 3\cdot 7$ & $1$                                     \\
      $811$ & $3$ & $2\cdot 3^4\cdot 5$    & $2\cdot 3^3\cdot 5$       & $2\cdot 3^2\cdot 5$ & $1$                                     \\
      $829$ & $2$ & $2^2\cdot 3^2\cdot 23$ & $2^2\cdot 3\cdot 23$      & $2^2\cdot 23$       & $1$                                     \\
      $883$ & $2$ & $2\cdot 3^2\cdot 7^2$  & $2\cdot 3\cdot 7$         & $2$                 & $10$                                    \\
      \bottomrule
    \end{tabular}
  \end{table}
\end{example}

We finish this section with a new result on the non-existence of $(n,2,l;\lambda)$-SEDFs.

\begin{theorem}\label{20231013themfour}
  Let $p$ be an odd prime number and let $\lambda$, $l$ be positive integers such that $2\le l\le p/2$ and $\lambda(2p-1)=l^2$. Assume that $\lambda$ has a prime factor $q$ which is a primitive root modulo $p$. Then there does not exist any $(2p,2,l;\lambda)$-SEDF.
\end{theorem}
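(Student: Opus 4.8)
The plan is to reduce everything to the structural machinery behind Theorem \ref{20231013themthree} and then to dispatch by hand the single case that theorem does not reach. Since $p$ is odd, any abelian group $G$ of order $2p$ is isomorphic to $\mathbb{Z}_2\times\mathbb{Z}_p$; in particular $p^2\nmid 2p$, so taking the prime $p$ itself in Theorem \ref{20231013themthree} we have $d=1$ and the Sylow $p$-subgroup $G_p$ has order exactly $p$. Condition (1) is immediate, condition (2) is precisely the hypothesis that $q$ is a primitive root mod $p$, and condition (3) becomes $p>2p/q^{\lceil f/2\rceil}$, i.e. $q^{\lceil f/2\rceil}>2$, where $f=v_q(\lambda)\ge 1$. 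If $q$ is odd this is automatic, so Theorem \ref{20231013themthree} already gives non-existence. If $q=2$, then since $2p-1$ is odd we get $f=v_2(l^2)=2v_2(l)$, which is even and $\ge 1$, hence $\ge 2$; thus $q^{\lceil f/2\rceil}=2^{f/2}>2$ except when $f=2$. So the only case escaping Theorem \ref{20231013themthree} is $q=2$, $f=2$ — equivalently $\lambda=4\lambda_0$ with $\lambda_0$ odd and $l=2l_0$ with $l_0$ odd — and this case is genuine (for instance $p=61$, $\lambda=4$, $l=22$ satisfies all the hypotheses, yet there condition (3) fails because $2p/2=61=|G_p|$).

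I would handle this residual case directly. Suppose $\{A_0,A_1\}$ is a $(2p,2,l;\lambda)$-SEDF in $G=P\times T$ with $|P|=p$, $|T|=2$, and reduce the defining identity $A_1A_0^{(-1)}=\lambda(G-1_G)$ modulo $T$: applying the ring homomorphism $\pi:\mathbb{Z}[G]\to\mathbb{Z}[P]$ induced by $G\to G/T$ and writing $B_i=\pi(A_i)$ (an element of $\mathbb{Z}[P]$ with coefficients in $\{0,1,2\}$ and coefficient-sum $l$), it becomes $B_1B_0^{(-1)}=2\lambda P-\lambda 1_P$, whence $\psi(B_1)\overline{\psi(B_0)}=-\lambda$ for every non-trivial character $\psi$ of $P$. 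Because $q=2$ is a primitive root mod $p$, the ideal $(2)$ of $\mathbb{Z}[\zeta_p]$ is prime, and since it is Galois-stable while the non-trivial characters of $P$ form a single Galois orbit, the valuations $v_0:=v_{(2)}(\psi(B_0))$ and $v_1:=v_{(2)}(\psi(B_1))$ are independent of $\psi$, with $v_0+v_1=v_{(2)}(\lambda)=2$.

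The key input is then a Ma's-lemma-type statement — provable via the embedding $\mathbb{Z}[P]\hookrightarrow\mathbb{Z}\oplus\mathbb{Z}[\zeta_p]$ of cokernel $p$ together with $\gcd(2,p)=1$, or directly by Fourier inversion: $2^{v_j}$ divides every coefficient of $pB_j-l\cdot P$ in $\mathbb{Z}[P]$, so all coefficients of $B_j$ are mutually congruent modulo $2^{v_j}$. Now examine the three possibilities for $(v_0,v_1)$. If $v_j=2$ for some $j$, then all coefficients of $B_j$ agree modulo $4$ and hence are equal (they lie in $\{0,1,2\}$), so the coefficient-sum is $0$, $p$, or $2p$ — none of which equals $l$ since $2\le l\le p/2$. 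This leaves $(v_0,v_1)=(1,1)$; then $pB_j\equiv l\cdot P\pmod 2$ with $l$ even and $p$ odd forces $B_j\equiv 0\pmod 2$, i.e. every coefficient of $B_j$ is even, so $B_j$ equals $2$ on an $(l/2)$-subset $S_j$ of $P$, i.e. $A_j=S_j\times T$. Substituting back, $A_1A_0^{(-1)}=S_1S_0^{(-1)}\,T^{2}=2\,S_1S_0^{(-1)}\,T$; evaluating the character of $G$ that is trivial on $P$ and non-trivial on $T$ gives $0$ on the left (the non-trivial character of $T$ kills $\sum_{\tau\in T}\tau$) but $-\lambda\ne 0$ on the right, a contradiction. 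This finishes the proof.

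I expect the main obstacle to be exactly the isolation and disposal of the $q=2$, $f=2$ case: Theorem \ref{20231013themthree} barely misses it (its strict inequality (3) degenerates to an equality), so it cannot simply be quoted, and one must exploit the special feature of $n=2p$, namely the subgroup of order $2$, to run the cyclotomic-valuation argument directly. The only other thing needing care is the parity bookkeeping — it is the oddness of $2p-1$ that makes $v_2(\lambda)$ even and thereby pins the residual case down to the single value $f=2$.
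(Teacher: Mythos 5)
Your proof is correct, but it takes a genuinely different route from the paper's. The paper does not invoke Theorem \ref{20231013themthree} at all: it gives one self-contained argument valid uniformly for every prime $q$. Writing $G$ as a cyclic group of order $2p$ and evaluating a character $\chi$ of order $2p$, it expands $\alpha=\chi(A_0)=\sum_{i=0}^{p-1}(n_i-n_{i+p})\xi_{2p}^{i}$ with coefficients $a_i\in\{0,\pm1\}$; the hypotheses $l\le p/2$ and $\gcd(p,2p-1)=1$ rule out $l=p$ and hence force some $a_{i_0}=0$, so the remaining powers $\xi_{2p}^{i}$ $(i\ne i_0)$ form an integral basis of $\mathbb{Z}[\xi_p]$, and the divisibility $q\mid\alpha$ (obtained, after possibly swapping $A_0$ and $A_1$, from $\chi(A_0)\overline{\chi(A_1)}=-\lambda$ and the primality of $q\mathbb{Z}[\xi_p]$) then forces $q\mid a_i$ for all $i$, i.e.\ $\alpha=0$, a contradiction. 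You instead quote Theorem \ref{20231013themthree} to dispose of every case except $q=2$ with $v_2(\lambda)=2$ --- correctly identified, and in fact exactly the case the paper's own subsequent remark singles out as the new content relative to \cite{jedwab2019construction} --- and then handle that case by projecting to $G/T\cong\mathbb{Z}_p$ and running a Ma's-lemma congruence on the coefficients of $B_j=\pi(A_j)$, which lie in $\{0,1,2\}$. I checked the details of your residual case (the Galois-invariance of the valuations $v_j$, the identity $v_0+v_1=2$, the Fourier-inversion congruence $p(B_j)_g\equiv l\pmod{2^{v_j}}$, and the final character evaluation nontrivial on $T$) and they are all sound. What the paper's argument buys is brevity, uniformity in $q$, and independence from the external theorem; what yours buys is a precise delineation of which parameter sets are genuinely beyond Theorem \ref{20231013themthree}, plus the structural by-product that in the borderline case each $A_j$ would have to be a union of $T$-cosets. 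Both proofs ultimately rest on the same two pillars --- the primality of $q\mathbb{Z}[\xi_p]$ and a coefficient-boundedness argument --- but the paper deploys them through a faithful character of order $2p$, while you deploy them through the order-$p$ quotient.
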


\begin{proof}
  Suppose that the family $\{A_0,A_1\}$ is a $(2p,2,l;\lambda)$-SEDF in a finite abelian group $G$. Since $|G|=2p$ and $(2,p)=1$, we know that $G$ is a cyclic group. Let $g_0$ be a generator of $G$ and let $\chi$ be the character of $G$ of order $2p$ which maps $g_0$ to $\xi_{2p}=\exp(\pi{\rm{i}}/p)$. Put
  $$\alpha=\chi(A_0)=\sum\limits_{g\in A_0}\chi(g),\quad\beta=\chi(A_1^{(-1)})=\sum\limits_{g\in A_1}\overline{\chi(g)}=\overline{\chi(A_1)}.$$
  Since $\{A_0,A_1\}$ is a $(2p,2,l;\lambda)$-SEDF, we have $A_0A_1^{(-1)}=\lambda(G-1_G)$ in $\mathbb{Z}[G]$, which implies that
  $$\alpha\beta=\lambda\big(\chi(G)-\chi(1_G)\big)=-\lambda\quad\mbox{in\ }\mathbb{C}.$$
  Since $\lambda\ne 0$, we have $\alpha$, $\beta\ne 0$. Note that $\alpha$, $\beta$ lie in $\mathcal{O}_K=\mathbb{Z}[\xi_{2p}]=\mathbb{Z}[\xi_p]$ (since $\xi_{2p}=\xi_{2p}^{2p+1}=-\xi_{2p}^{p+1}=-\xi_p^{(p+1)/2}$), i.e., the ring of algebraic integers of the cyclotomic field $K=\mathbb{Q}(\xi_{2p})=\mathbb{Q}(\xi_p)$ (see \cite[Theorem 2.6]{washington1997introduction}), where $\xi_p=\exp(\pi{\rm{i}}/p)$.\\
  \indent By assumption, we have $q\mid\lambda$ in $\mathbb{Z}$, which implies that $q\mid\alpha\beta$ in $\mathcal{O}_K$. Since $q$ is a primitive root modulo $p$, $q\mathcal{O}_K$ is a prime ideal of $\mathcal{O}_K$, which implies that $q\mid\alpha$ or $q\mid\beta$ in $\mathcal{O}_K$. Without loss of generality, we may assume that $q\mid\alpha$ in $\mathcal{O}_K$.\\
  \indent For any $0\le i\le 2p-1$, put
  $$n_i=\begin{cases}
      1, & \mbox{if\ }g_0^i\in A_0,    \\
      0, & \mbox{if\ }g_0^i\notin A_0.
    \end{cases}$$
  Then $\sum\limits_{i=0}^{2p-1}n_i=|A_0|=l$. Moreover, we have
  \begin{align*}
    \alpha & =\sum\limits_{g\in A_0}\chi(g)  =\sum\limits_{i=0}^{2p-1}n_i\chi(g_0^i)=\sum\limits_{i=0}^{2p-1}n_i\xi_{2p}^i \\
           & =\sum\limits_{i=0}^{p-1}(n_i\xi_{2p}^i+n_{i+p}\xi_{2p}^{i+p})                                                 \\
           & =\sum\limits_{i=0}^{p-1}(n_i-n_{i+p})\xi_{2p}^i=\sum\limits_{i=0}^{p-1}a_i\xi_{2p}^i,
  \end{align*}
  where $a_i=n_i-n_{i+p}\in\{0,\pm 1\}$ for any $0\le i\le p-1$. Since $\alpha\ne 0$, at least one $a_i$ ($0\le i\le p-1$) is non-zero. If $a_i=n_i-n_{i+p}\ne 0$ for any $0\le i\le p-1$, then $n_i+n_{i+p}=1$ for any $0\le i\le p-1$, which implies that
  $$l=\sum\limits_{i=0}^{2p-1}n_i=\sum\limits_{i=0}^{p-1}(n_i+n_{i+p})=p.$$
  If $l=p$, then the equality $\lambda(n-1)=l^2$ becomes $\lambda(2p-1)=p^2$. Since $(2p-1,p)=1$ and $2p-1>1$, this is impossible. Hence there must exist $0\le i_0\le p-1$ such that $a_{i_0}=0$. Since $[K:\mathbb{Q}]=[\mathbb{Q}(\xi_p):\mathbb{Q}]=p-1$, the set
  $$\{\xi_{2p}^i:\ 0\le i\le p-1,\ i\ne i_0\}$$
  is a $\mathbb{Q}$-basis of $K$. Since
  $$q\mid\alpha\ \mbox{in }\mathcal{O}_K\quad\mbox{and}\quad\alpha=\sum\limits_{g\in A_0}\chi(g)=\sum\limits_{\substack{0\le i\le p-1\\i\ne i_0}}a_i\xi_{2p}^{i},$$
  we obtain that $q\mid a_i$ in $\mathbb{Z}$ for any $0\le i\le p-1$. Since $a_i\in\{0,\pm 1\}$ and at least one $a_i$ is non-zero, we must have $q=1$, which contradicts the assumption that $q$ is a prime. Hence $q\nmid\alpha$ in $\mathcal{O}_K$ and thus there does not exist any $(2p,2,l;\lambda)$-SEDF.
\end{proof}

\begin{remark}
  If the conditions of Theorem \ref{20231013themfour} hold and furthermore $q=2$, $2^3\nmid\lambda$, then the condition $|G_p|>n/q^{\lceil f/2\rceil}$ in Theorem \ref{20231013themthree} does not hold. Indeed, in this case, we have $|G_p|=p$, $1\le f\le 2$ and thus $n/q^{\lceil f/2\rceil}=2p/2=p=|G_p|$. Therefore, by Theorem \ref{20231013themfour}, the $(2p,2,l;\lambda)$-SEDFs in Table \ref{20231012table3} do not exist, which, however, cannot be derived from Theorem \ref{20231013themthree}.
  \begin{table}[htbp]
    \centering
    \caption{Non-existent $(2p,2,l;\lambda)$-SEDFs}\label{20231012table3}
    \begin{tabular}{ccccccc}
      \toprule
      $n=2p$ & $p$    & $q$ & $n-1$  & $l$         & $\lambda$   & $(2l\le n)$      \\
      \midrule
      $1226$ & $613$  & $2$ & $35^2$ & $70(2t+1)$  & $4(2t+1)^2$ & $(0\le t\le 3)$  \\
      $2602$ & $1301$ & $2$ & $51^2$ & $102(2t+1)$ & $4(2t+1)^2$ & $(0\le t\le 5)$  \\
      $7226$ & $3613$ & $2$ & $85^2$ & $170(2t+1)$ & $4(2t+1)^2$ & $(0\le t\le 10)$ \\
      \bottomrule
    \end{tabular}
  \end{table}
\end{remark}


\section{Conclusion}
In this paper, in order to address the questions raised in \cite{veitch2023unconditionally}, and also for the development of the theory, we have made an intensive study of CEDFs and SCEDFs.

For CEDFs, we have generalized the construction introduced in \cite{veitch2023unconditionally}, thereby obtaining many concrete examples of CEDFs. In particular, we have been able to construct an infinite class of CEDFs with $\lambda=1$ by showing the existence of a $(q,m,2;1)$-$c$-CEDF in the additive group of the finite field $\mathbb{F}_q$ for some $c\in [m-1]$, provided that $q=4m+1\ge 13$.

For SCEDFs, we have shown that all SCEDFs are trivial, so that the study of SCEDFs reduces to the study of $(n,2,l;\lambda)$-SEDFs. We have also presented new results on the non-existence of $(2p,2,l;\lambda)$-SEDFs, where $p$ is a prime number.

Note that the CEDFs we have constructed are all in the additive groups of finite fields, and our results on the non-existence of SEDFs can only deal with the case of certain cyclic groups. The construction and non-existence of CEDFs and SEDFs in more general abelian groups is a worthwhile topic.

\bibliography{sn-bibliography}

\end{document}